\newtheoremstyle{stylename}
  {\z@}
  {\z@}
  {}
  {\parindent}
  {\itshape}
  {:}
  {5\p@ plus\p@ minus\p@\relax}
  {\thmname{#1}\thmnumber{ #2}\thmnote{ (#3)}}
\newlength{\myparindent}
\theoremstyle{stylename}
\newtheorem{theorem}{Theorem}
\newtheorem{corollary}{Corollary}
\newtheorem{lemma}{Lemma}
\newtheorem{proposition}{Proposition}
\newtheorem{definition}{Definition}
\newtheorem{assumption}{Assumption}
\newtheorem{remark}{Remark}
\newcommand{\new}{\color{blue}}
\newcommand{\SymCap}{\mathrm{Cap}}
\begin{document}
\title{Data-driven control of switched linear systems with probabilistic stability guarantees}

\author{Zheming Wang, Guillaume O. Berger and Rapha\"el M. Jungers
\thanks{Z. Wang and R. Jungers are with the ICTEAM Institute, UCLouvain, Louvain-la-Neuve,1348, Belgium. R. Jungers is a  FNRS honorary Research Associate. Email: \{zheming.wang,raphael.jungers\}@uclouvain.be}
\thanks{G. Berger is with the CUPLV lab, University of Colorado Boulder. G. Berger is a BAEF fellow.  Email: guillaume.berger@colorado.edu.}
\thanks{This project has received funding from the European Research Council (ERC) under the European Union's Horizon 2020 research and innovation programme under grant agreement No 864017 - L2C.
R. Jungers is also supported by the Walloon Region and the Innoviris Foundation.
 }}

\maketitle
\begin{abstract}
This paper tackles state feedback control of switched linear systems under arbitrary switching.
We propose a data-driven control framework that allows to compute a stabilizing state feedback using only a finite set of observations of trajectories with quadratic and sum of squares (SOS) Lyapunov functions.
We do not require any knowledge on the dynamics or the switching signal, and as a consequence, we aim at solving \emph{uniform} stabilization problems in which the feedback is stabilizing for all possible switching sequences. In order to generalize the solution obtained from trajectories to the actual system, probabilistic guarantees on the obtained quadratic or SOS Lyapunov function are derived in the spirit of scenario optimization. For the quadratic Lyapunov technique, the generalization relies on a geometric analysis argument, while, for the SOS Lyapunov technique, we follow a sensitivity analysis argument. In order to deal with high-dimensional systems, we also develop parallelized schemes for both techniques. We show that, with some modifications, the data-driven quadratic Lyapunov technique can be extended to LQR control design. Finally, the proposed data-driven control framework is demonstrated on several numerical examples.
\end{abstract}

\begin{IEEEkeywords}
Switched linear systems, stabilization, data-driven control, scenario optimization
\end{IEEEkeywords}

\section{Introduction}
Switched systems are typical hybrid dynamical systems which consist of a number of dynamics modes and a switching rule selecting the current mode.
The jump from one mode to another often causes complicated hybrid behaviors resulting in significant challenges in stability analysis and control design, see, e.g.,  \cite{ART:LM99,BOO:L03,ART:LA09}. This paper focuses on stabilization and control of switched linear systems. 

Many control techniques have been proposed for switched systems depending on the assumptions on the switching rule.  In the case where the switching signal is the only control input,  a standard technique for achieving stabilization is to impose constraints on the switching sequence, e.g.,  dwell time  \cite{ART:M96,ART:GC06,INP:CCSGM10} and average dwell time \cite{INP:HM99,INP:ZHYM02}. More advanced techniques use state-dependent switching rules to achieve stabilization, see, e.g., \cite{ART:GC06,ART:FGJ15,ART:JM17} and the references therein. In \cite{ART:FJ14,ART:FAJ15}, necessary and sufficient conditions for stabilizability are also provided. For switched systems with affine control inputs, the stabilization problem becomes even more complicated due to extra freedom.
In \cite{ART:BM99,ART:LD04}, the (time) varying nature of the dynamics is considered as uncertainty and uniform state feedback stabilization laws are proposed for all possible switching sequences. When both the affine control input and the switching signal are accessible, exponential stabilization can be achieved for instance by using piecewise quadratic control Lyapunov functions which are essentially solutions to switched LQR problems \cite{ART:ZAHV09}. In the presence of state and input constraints, optimal control of switched linear systems is also addressed under the framework of model predictive control \cite{ART:OWD16,ART:VT19}. However, these stabilization methods all require a model of the underlying switched system.

While there exist hybrid system identification techniques \cite{BOO:LB18}, identification of state-space models of switching systems is in general cumbersome and computationally demanding. More specifically, identifying a switched linear system is NP-hard \cite{ART:L16} and most of currently available techniques as mentioned in \cite{BOO:LB18} rely on heuristics and lack of formal guarantees. In recent years, data-driven analysis and control under the framework of black-box systems has received a lot of attention, see, e.g., \cite{INP:KQHT16,ART:KBJT19,ART:SDDJMD20,ART:WJ20}. For instance, probabilistic stability guarantees are provided in \cite{ART:KBJT19,INP:RWJ21,INP:BJW21} for black-box switched linear systems, based on the observation of a finite set of trajectories. Let us also mention that, although data-driven techniques for controlling linear systems already exist (see, e.g., \cite{ART:VETC20}), they are not able to tackle switched systems.

In this paper, we address feedback control design for switched linear systems without knowing the model of the system or the switching signal.
As the switching is considered as a source of uncertainty, we need to design a uniform state feedback controller allowing to stabilize the system in the worst case, similar to \cite{ART:BM99,ART:LD04}.
To do this, we compute a state feedback controller and a common Lyapunov function for all the switching modes of the closed-loop system using a finite set of trajectories. We use both quadratic and sum of squares (SOS) Lyapunov functions, which lead to constrained nonlinear optimization problems with a large number of Lyapunov inequalities. For numerical tractability, we then design algorithms to solve these problems by making use of the underlying structure. For example, with quadratic Lyapunov functions, the biconvex Lyapunov inequalities allow to use alternating minimization where each iteration solves convex problems.

One major issue of this data-based feedback control design is that it is usually only valid for the regions where the data is sampled but may not stabilize the actual system in the whole space.
In order to formally describe the properties of the controller, we derive probabilistic stability guarantees in the spirit of scenario optimization \cite{ART:C10,ART:CGR18,ART:CG18}. In this context, one trajectory can be considered as a scenario and the stabilization problem formulated based on a set of trajectories is a sampled problem. As our problem is non-convex, the convex chance-constrained theorems in \cite{ART:C10} are not applicable. While chance-constrained theorems for nonlinear optimization problems also exist in \cite{ART:CGR18,ART:CG18}, their probabilistic bounds rely on the knowledge of the essential set (which is basically the set of irremovable constraints).
Identifying this set can be very expensive for general nonlinear problems, in particular for nonlinear semidefinite problems. Hence, the techniques in \cite{ART:CGR18,ART:CG18} are not suitable for our case which involves a large number of polynomial constraints and linear/bilinear matrix inequalities. Instead, probabilistic stability guarantees in this paper are derived relying on the notions of set covering and packing (see, e.g., \cite[Chapter 27]{BOO:SB14}) and geometric/sensitivity analysis of the underlying problem. Similar probabilistic guarantees are also developed in \cite{ART:KBJT19,INP:RWJ21,INP:BJW21} for autonomous systems. However, the probabilistic guarantees in \cite{ART:KBJT19,INP:RWJ21,INP:BJW21} all require the optimality of the obtained solution, while our techniques work with any feasible solution of the underlying optimization problem. This allows to parallelize our algorithms to substantially speed up the computations. Finally, we show that the proposed data-driven Lyapunov framework can be extended to switched LQR problems.

A preliminary version of this paper appears as a conference paper in \cite{INP:WBJ21} which only considers quadratic stabilization, and does not contain complete proofs. In this paper, we provide complete detailed proofs of all the results in \cite{INP:WBJ21}.  In particular, the present paper contains the first published proof of our main Theorem \ref{thm:gamma}. Furthermore, we present an extension to SOS stabilization which calls for a new technique for deriving probabilistic stability guarantees. Besides the stabilization problem, we also consider a switched LQR problem. In addition, to circumvent computational issues, we also present a parallelized scheme for both quadratic and SOS stabilization.

The rest of the paper is organized as follows. This section ends with the notation, followed by Section \ref{sec:pre} on the review of preliminary results on stability of switched linear systems and the formulation of the state feedback stabilization problem. Section \ref{sec:method} presents the proposed data-driven quadratic Lyapunov technique, including an alternating minimization algorithm, probabilistic stability analysis, and a parallelized scheme. In Section \ref{sec:sos}, the SOS Lyapunov technique is considered with a similar alternating minimization algorithm. In Section \ref{sec:LQR}, we extend the data-driven Lyapunov framework to switched LQR design. Numerical results are provided in Section \ref{sec:num}.

\textbf{Notation}.
The set of non-negative integers is denoted by $\mathbb{Z}^+$.
For a square matrix $Q$, $Q\succ(\succeq)~0$ means that $Q$ is symmetric and positive definite (semi-definite).
$\mathbb{S}$ and $\mathbb{B}$ are the unit sphere and the unit (closed) ball in $\mathbb{R}^n$ respectively.
$\mu(\cdot)$ denotes the uniform spherical measure on $\mathbb{S}$ with $\mu(\mathbb{S}) = 1$.
For any square matrix $P$, $\textrm{tr}(P)$ denotes the trace of $P$.
For any symmetric matrix $P$, we denote by $\lambda_{\max}(P)$ and $\lambda_{\min}(P)$ the largest and smallest eigen\-values of $P$ respectively. For any matrix $P\succ 0$, let $\kappa(P)\coloneqq \lambda_{\max}(P)/\lambda_{\min}(P)$ be the condition number. For any $p\ge 1$, the $\ell_p$ norm of a vector $x\in \mathbb{R}^n$ is $\|x\|_p$ ($\|x\|$ is the $\ell_2$ norm by default) with $\|x\|_Q^2= x^\top Qx$ for any $Q\succeq 0$. Finally, given $x\in\mathbb{S}$ and $\theta\in [0,\pi/2]$, let $\SymCap(x,\theta)\coloneqq\{v\in \mathbb{S}: \lvert x^\top v\rvert \ge \cos(\theta)\}$ be the \emph{symmetric spherical cap} with direction $x$ and angle $\theta$.

\section{Preliminaries and problem statement}\label{sec:pre}
We consider the following switched linear system
\begin{align}\label{eqn:Asigma}
x(t+1) = A_{\sigma(t)}x(t) + Bu(t), \quad t\in \mathbb{Z}^+,
\end{align}
where $x(t)\in \mathbb{R}^n$ is the state vector,  $u(t)\in \mathbb{R}^m$ is the input and $\sigma: \mathbb{Z}^+\rightarrow \mathcal{M}\coloneqq\{1,2,\cdots,M\}$ is a time-dependent switching signal that indicates the current active mode of the system among $M$ possible modes in $\mathcal{A}\coloneqq\{A_1,A_2,\cdots, A_M\}$. In this paper, we consider the case in which the switching signal is changing arbitrarily and cannot be observed, i.e., the information on the switching signal is not available. Note that the input matrix $B$ is constant.  Our goal is to find a static linear state feedback stabilizing the system under arbitrary switching, that is, a feedback matrix $K\in \mathbb{R}^{m\times n}$ such that the closed-loop system below is stable for all switching signals
\begin{align}\label{eqn:AsigmaK}
x(t+1) = (A_{\sigma(t)}+BK) x(t), \quad t\in \mathbb{Z}^+.
\end{align}
For notational convenience, let $\mathcal{A}_K\coloneqq\{A_1+BK,A_2+BK,\cdots, A_M+BK\}$ for a given $K\in \mathbb{R}^{m\times n}$. The stability of System (\ref{eqn:AsigmaK}) under arbitrary switching can be described by the joint spectral radius (JSR) \cite{BOO:J09} of the matrix set $\mathcal{A}_K$ defined by 
\begin{align}\label{eqn:rhoAK}
\rho(\mathcal{A}_K)\coloneqq \lim\limits_{k\rightarrow \infty} \max\limits_{\pmb{\sigma}(k)\in \mathcal{M}^k}\|\bar{\pmb{A}}_{\pmb{\sigma}(k)} (K)\|^{1/k}
\end{align}
where $\pmb{\sigma}(k)\coloneqq\{\sigma(0),\sigma(1), \cdots, \sigma(k-1)\}$ and $\bar{\pmb{A}}_{\pmb{\sigma}(k)} (K) = (A_{\sigma(k-1)}+BK)\cdots (A_{\sigma(1)}+BK) (A_{\sigma(0)}+BK)$. System (\ref{eqn:AsigmaK}) is asymptotically stable when $\rho(\mathcal{A}_K)<1$. Hence, the state feedback stabilization problem for System (\ref{eqn:Asigma}) amounts to finding a $K\in \mathbb{R}^{m\times n}$ such that $\rho(\mathcal{A}_K)<1$. However, the computation of the JSR of a set of matrices is known to be a very challenging problem in general, let alone its optimization in the context of control design. For this reason, we use tractable upper bounds on the JSR, providing sufficient conditions for stability or stabilization, see \cite{BOO:J09}. The following proposition provides a sufficient condition based on a common quadratic Lyapunov function which can be computed via semidefinite programming (SDP) \cite{BOO:BV04}.

\begin{proposition}[{\cite[Prop. 2.8]{BOO:J09}}]\label{prop:jsrq}
Consider the closed-loop matrices $\mathcal{A}_K$ for some state feedback $K\in \mathbb{R}^{m\times n}$. If there exist $\gamma\ge 0$ and $P \succ 0$ such that $A^\top PA \preceq \gamma^2 P$, $\forall\,A\in \mathcal{A}_K$, then $\rho(\mathcal{A}_K) \le \gamma$.
\end{proposition}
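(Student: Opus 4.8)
The plan is to exploit the fact that the hypothesis $A^\top PA \preceq \gamma^2 P$ is precisely the statement that every closed-loop matrix is a contraction, up to the factor $\gamma$, in the ellipsoidal norm induced by $P$. Concretely, set $\|x\|_P \coloneqq \sqrt{x^\top Px}$ and let $\|\cdot\|_P$ denote as well the associated induced matrix norm. Then $A^\top PA \preceq \gamma^2 P$ rearranges to $x^\top A^\top PAx \le \gamma^2 x^\top Px$ for every $x$, i.e.\ $\|Ax\|_P \le \gamma\|x\|_P$, and hence $\|A\|_P \le \gamma$ for every $A\in\mathcal{A}_K$.

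Next I would invoke submultiplicativity of the induced norm: for any length $k$ and any switching sequence $\pmb{\sigma}(k)\in\mathcal{M}^k$, the product $\bar{\pmb{A}}_{\pmb{\sigma}(k)}(K)$ is a product of $k$ matrices each of $\|\cdot\|_P$-norm at most $\gamma$, so $\|\bar{\pmb{A}}_{\pmb{\sigma}(k)}(K)\|_P \le \gamma^k$. To transfer this back to the norm $\|\cdot\|$ appearing in the definition (\ref{eqn:rhoAK}) of $\rho(\mathcal{A}_K)$, I would use equivalence of norms on the finite-dimensional space of $n\times n$ matrices: there is a constant $\kappa\ge 1$ with $\|M\|\le\kappa\,\|M\|_P$ for all $M$; when $\|\cdot\|$ is the spectral norm one may take $\kappa = \sqrt{\lambda_{\max}(P)/\lambda_{\min}(P)}$, using $\|M\|_P = \|P^{1/2}MP^{-1/2}\|$. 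Combining the two bounds gives $\|\bar{\pmb{A}}_{\pmb{\sigma}(k)}(K)\| \le \kappa\gamma^k$ uniformly over $\pmb{\sigma}(k)$.

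Finally I would take the $k$-th root and let $k\to\infty$:
\[
\max_{\pmb{\sigma}(k)\in\mathcal{M}^k}\|\bar{\pmb{A}}_{\pmb{\sigma}(k)}(K)\|^{1/k} \;\le\; \kappa^{1/k}\gamma \;\xrightarrow[k\to\infty]{}\; \gamma ,
\]
which yields $\rho(\mathcal{A}_K)\le\gamma$. (The limit defining $\rho$ exists by Fekete's subadditive lemma applied to $k\mapsto\log\max_{\pmb{\sigma}(k)}\|\bar{\pmb{A}}_{\pmb{\sigma}(k)}(K)\|$, so one may equally argue with the limit superior.)

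There is no serious obstacle here; the only points needing a little care are the degenerate case $\gamma=0$, where $A^\top PA\preceq 0$ together with $P\succ 0$ forces $A=0$ for every $A\in\mathcal{A}_K$ and $\rho(\mathcal{A}_K)=0$ trivially, and making the passage between $\|\cdot\|$ and $\|\cdot\|_P$ quantitative enough that the $k$-dependence is visibly the harmless factor $\kappa^{1/k}\to 1$.
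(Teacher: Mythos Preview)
Your argument is correct and is the standard proof of this classical fact. Note that the paper does not supply its own proof of this proposition: it is quoted as \cite[Prop.~2.8]{BOO:J09}, so there is nothing to compare against beyond observing that your ellipsoidal-norm/submultiplicativity argument is exactly the reasoning underlying the cited result.
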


From this proposition, we formulate the following nonlinear semidefinite optimization problem for stabilization of switched linear systems:
\begin{subequations}\label{eqn:PK}
\begin{align}
\gamma^*\coloneqq&\min_{\gamma\ge 0, P, K} \gamma\\
\textrm{s.t.} \quad &(A+BK)^\top P (A+BK) \preceq \gamma^2 P,\: \forall\,A\in \mathcal{A}\\
& P \succ 0.
\end{align}
\end{subequations}
Using the Schur complement formula \cite[Theorem 1.12]{BOO:Z06} with $S = P^{-1}$ and $Y = KS$, the nonlinear constraints in (\ref{eqn:PK}) can be converted into linear matrix inequalities (LMI) when $\gamma$ is fixed:
\begin{subequations}\label{eqn:QY}
\begin{align}
&\min_{\gamma\ge 0, S, Y} \gamma\\
\textrm{s.t.} \quad  & \!\left(\!\!\begin{array}{cc}
\gamma^2 S & SA^\top + Y^\top B^\top\\
AS + BY & S
\end{array}\!\!\right) \succeq 0,\: \forall\, A\in \mathcal{A}\\
& S \succ 0.
\end{align}
\end{subequations}
Such a transformation is widely used in stability analysis and control design, see, e.g., \cite{ART:VB00}. When the matrices $\mathcal{A}$ are known, Problem (\ref{eqn:QY}) can be efficiently solved via semidefinite programming and bisection on $\gamma$. 

In this paper, we aim to solve the stabilization problem of switched linear systems when the matrices $\mathcal{A}$ are unknown and only a finite set of trajectories of the system are observed. Such systems are called black-box switched linear systems as in \cite{ART:KBJT19}. To this end, we reformulate Problem (\ref{eqn:PK}) as a problem with an infinite number of constraints below:
\begin{subequations}\label{eqn:PKS}
\begin{align}
&\min_{\gamma\ge 0, P, K} \gamma\\
\textrm{s.t.} \quad  & (Ax+BKx)^\top P (Ax+BKx) \le \gamma^2 x^\top Px, \nonumber\\
&\qquad  \forall\,A\in \mathcal{A},\: \forall\,x\in \mathbb{R}^n \label{eqn:AxBKxPgammaR}\\
& P \succ 0.
\end{align}
\end{subequations}
By homogeneity, one can restrict the constraints in (\ref{eqn:AxBKxPgammaR}) to the set of points in the unit sphere $\mathbb{S}$ instead of the whole space $\mathbb{R}^n$. As we will show later, the formulation in (\ref{eqn:PKS}) allows us to develop a model-free and data-based control design. For that, we make the following assumption about the observations available.

\begin{assumption}
The state $x(t)$ can be fully observed for all $t\in \mathbb{Z}^+$, the input matrix $B$ is time-invariant and known, and the number of modes (or an upper bound) is available.
\end{assumption}

The assumption that $B$ is time-invariant is not restrictive in many applications, for instance, when the switching only occurs in some parameters of the dynamics. Such an assumption is often made in the literature, see, e.g., \cite{ART:BM99,ART:SRD19}.

\section{State feedback stabilization}\label{sec:method}
This section presents our model-free quadratic Lyapunov technique for stabilizing black-box switched linear systems. We first formulate a sample-based stabilization problem, which consists of a set of biconvex constraints. Then, to solve this problem, we present an alternating minimization algorithm that generates feasible iterates. With the concepts of covering/packing numbers, probabilistic guarantees on the obtained solution are then provided using geometric analysis. Finally, we also show that the algorithm can be parallelized to speed up the computation.

\subsection{Sampled stabilization problem} \label{sec:samplestable}
For the model-free design, we sample a finite number of pairs of the initial state and switching mode. More precisely, we randomly and uniformly generate $N$ initial states on $\mathbb{S}$ and $N$ modes in $\mathcal{M}$, which are denoted by $\omega_N\coloneqq\{(x_i,\sigma_i)\in \mathbb{S} \times \mathcal{M}: i=1,2,\cdots,N\}$.  From this random sampling, we observe the trajectories of  the open-loop system of System (\ref{eqn:Asigma}) with $u=0$ and obtain the observed data set $\{(x_i,A_{\sigma_i}x_i): i=1,2,\cdots,N\}$, where $A_{\sigma_i}x_i$ is the successor of the initial state $x_i$ with respect to mode $\sigma_i$. Note that although $A_{\sigma_i}x_i$ depends on $\sigma_i$, $\sigma_i$ is not directly observed. 

For the given sample set $\omega_N$, we define the following \emph{sampled} problem:
\begin{subequations}\label{eqn:gammaomegaN}
\begin{align}
&\min_{\gamma\ge 0, P, K} \gamma\\
\textrm{s.t.} \quad  & (A_\sigma x+BKx)^\top P (A_\sigma x+BKx) \le \gamma^2 x^\top Px, \nonumber\\
&\qquad  \forall\,(x,\sigma) \in \omega_N \label{eqn:gammaomegaNxp}\\
& P \succ 0.
\end{align}
\end{subequations}
From the homogeneity property of (\ref{eqn:gammaomegaNxp}), the inequality $P\succ 0$ can be replaced by $P \succeq I$ due to the fact that the feasibility of $P$ implies the feasibility of $P/\lambda_{\min}(P)$. When the size of $\omega_N$ is small, the sampled problem (\ref{eqn:gammaomegaN}) can be solved using polynomial optimization toolboxes \cite{ART:L01,ART:HL03,sostools}. As $P$ is invertible, the Schur complement formula \cite[Theorem 1.12]{BOO:Z06} can be applied to (\ref{eqn:gammaomegaNxp}) using the reformulation below
\begin{align}
	(A_\sigma x+BKx)^\top P P^{-1} P (A_\sigma x+BKx) \le \gamma^2 x^\top Px, \nonumber\\
	\forall\,(x,\sigma) \in \omega_N.
\end{align}
We can then convert the constraints in (\ref{eqn:gammaomegaNxp}) into a set of bilinear matrix inequalities (BMI) and reformulate Problem (\ref{eqn:gammaomegaN}) as the following BMI problem
\begin{subequations}\label{eqn:PKIomega}
\begin{align}
&\min_{\gamma\ge 0, P \succeq I, K} \gamma\\
\textrm{s.t.} \quad  &\!\left(\!\!\begin{array}{cc}
\gamma^2 x^\top P x  & (A_\sigma x+BKx)^\top P\\
P (A_\sigma x+BKx) & P
\end{array}\!\!\right) \succeq 0, \nonumber\\
&\qquad  \forall\,(x,\sigma) \in \omega_N.  \label{eqn:PKIomegaxp}
\end{align}
\end{subequations}
This reformulation allows us to solve Problem (\ref{eqn:gammaomegaN}) using BMI solvers \cite{ART:VB00,INP:HLKS05,INP:KZM18}.


\subsection{An alternating algorithm}\label{sec:alternating}
As the size of $\omega_N$ increases, it becomes numerically intractable to find a (local) optimum of (\ref{eqn:gammaomegaN}) or (\ref{eqn:PKIomega}) using the aforementioned polynomial or BMI solvers. Since we do not seek to have optimality, we can use a less costly approach described below. We propose an alternating minimization algorithm between $P$ and $K$ for its numerical tractability and simple implementation. As we will show later, this alternating algorithm also enables us to parallelize the computation. Given a fixed $K$, we also define:
\begin{subequations}\label{eqn:Pgamma}
\begin{align}
\bar{\mathcal{P}}(\omega_N;K)\coloneqq &\min_{\gamma\ge 0,  P \succeq I} \gamma\\
\textrm{s.t.} \quad  & (A_\sigma x+BKx)^\top P (A_\sigma x+BKx) \le \gamma^2 x^\top Px \nonumber\\
&\qquad  \forall\,(x,\sigma) \in \omega_N \label{eqn:AsigmaxBKxP}
\end{align}
\end{subequations}
For fixed values of $\gamma$, the constraints (\ref{eqn:AsigmaxBKxP}) reduce to LMIs, so that Problem (\ref{eqn:Pgamma}) can be solved efficiently using SDP solvers \cite{BOO:BV04} and bisection on $\gamma$, with the solution of (\ref{eqn:Kgamma}) being the initial feasible guess. Given a fixed $P$, we define:
\begin{subequations}\label{eqn:Kgamma}
	\begin{align}
		\hat{\mathcal{P}}(\omega_N;P)\coloneqq &\min_{\gamma\ge 0, K} \gamma\\
		\textrm{s.t.} \quad  & (A_\sigma x+BKx)^\top P (A_\sigma x+BKx) \le \gamma^2 x^\top Px \nonumber\\
		&\qquad  \forall\,(x,\sigma) \in \omega_N
	\end{align}
\end{subequations}
Problem (\ref{eqn:Kgamma}) is a second-order cone program that can be solved by well-documented convex solvers, like interior point methods \cite{BOO:BV04}. The overall procedure is summarized in Algorithm \ref{algo:dataK}. Note that this alternating algorithm always terminates though it does not necessarily converge to a (local) optimum of Problem (\ref{eqn:PKIomega}).

%

\begin{algorithm}[h]
\caption{Alternating minimization for quadratic stabilization}
\begin{algorithmic}[1]
\renewcommand{\algorithmicrequire}{\textbf{Input:}}
\renewcommand{\algorithmicensure}{\textbf{Output:}}
\REQUIRE $\{(x_i,A_{\sigma_i}x_i)\} _{i=1}^N$, $B$  and some tolerance $\epsilon_{tol}>0$
\ENSURE $\gamma(\omega_N),P(\omega_N)$, and $K(\omega_N)$\\
\textit{Initialization}: Let $k\leftarrow 0$, $K_k \leftarrow 0$, $P_k \leftarrow I$, and $\gamma_k \leftarrow \max_{(x,\sigma)\in \omega_N} \frac{ \|A_\sigma x\|}{\|x\|}$; 
\STATE {\new Obtain $P_{k+1}$ from (\ref{eqn:Pgamma}) with $K=K_k$ via bisection on $\gamma$ starting from $\max_{(x,\sigma)\in \omega_N} \frac{ \|(A_{\sigma} x+BK_kx)\|_{P_k}}{\|x\|_{P_k}}$;}
\STATE Obtain $K_{k+1}$ and $\gamma_{k+1}$ from (\ref{eqn:Kgamma}) with $P = P_{k+1}$;
\IF {$\|\gamma_{k+1}-\gamma_k\|< \epsilon_{tol}$}
\STATE $\gamma(\omega_N)\leftarrow \gamma_{k+1}$, $P(\omega_N)\leftarrow P_{k+1}$, $K(\omega_N)\leftarrow K_{k+1}$;
\STATE Terminate;
\ELSE
\STATE Let $k\leftarrow k+1$ and go to Step 2.
\ENDIF
\end{algorithmic}
\label{algo:dataK}
\end{algorithm}

\subsection{Probabilistic stability guarantees}\label{sec:prob}
We now derive formal stability guarantees on the solution obtained from Algorithm \ref{algo:dataK}. Let us first introduce a few definitions and notation. For any $\theta\in[0,\pi/2]$ and any $x\in\mathbb{S}$, we let $\delta(\theta)$ denote the relative area of the \emph{symmetric spherical cap} $\SymCap(x,\theta)$, given by $\{v\in \mathbb{S}: \lvert x^\top v\rvert \ge \cos(\theta)\}$. From \cite{ART:Ls11}, it holds that
\begin{align}\label{eqn:deltatheta}
	\delta(\theta)=\mathcal{I}(\sin^2(\theta);\frac{n-1}{2},\frac{1}{2})
\end{align}
where $\mathcal{I}(x;a,b)$ is the regularized incomplete beta function defined as
\begin{align}
	\mathcal{I}(x;a,b) \coloneqq \frac{\int_0^x t^{a-1}(1-t)^{b-1}dt}{\int_0^1 t^{a-1}(1-t)^{b-1}dt};
\end{align}
The function $\delta$ is strictly increasing with $\theta$, and thus we can define its inverse, denoted by $\delta^{-1}$, see also Figure~\ref{fig:delta_theta} for an illustration. Let $\delta_v(\theta)$ denote the relative volume of the portion of the unit ball $\mathbb{B}$ enclosed by $\SymCap(x,\theta)$ and the hyperplanes $\lvert x^\top v\rvert = \cos(\theta)$, expressed as $\{z\in \mathbb{B}: |x^\top z|\ge \cos(\theta)\}$. We recall again from \cite{ART:Ls11} that $\delta_v(\theta)$ can be given by
\begin{align}\label{eqn:deltavtheta}
	\delta_v(\theta) = \mathcal{I}(\sin^2(\theta);\frac{n+1}{2},\frac{1}{2}), \quad \forall \theta\in[0,\pi/2].
\end{align}
Similarly, the function $\delta_v$ is also strictly increasing with $\theta$ and its inverse is denoted by $\delta_v^{-1}$.

\begin{figure}[h]
\centering
\includegraphics[width=0.8\linewidth]{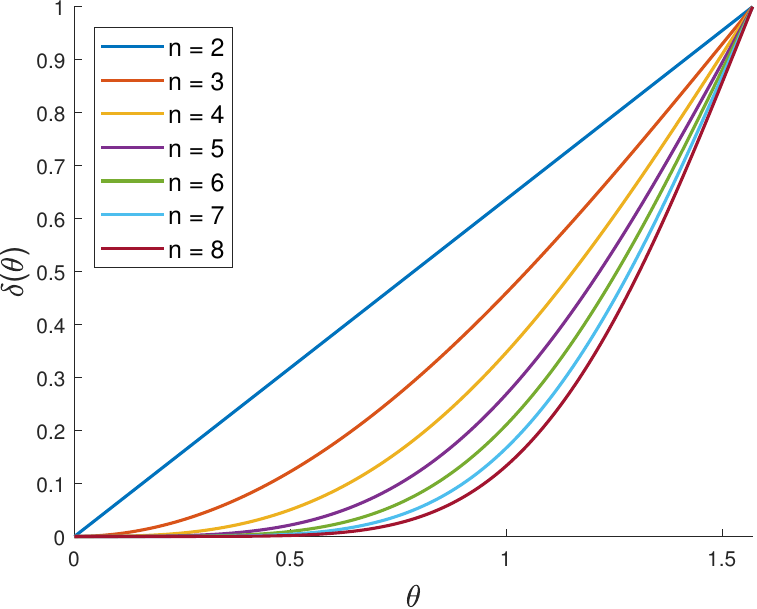}
\caption{ Measure $\mu$ of the symmetric spherical cap $\SymCap(x,\theta)$ in $\mathbb{R}^n$ for different values of $n$.}
\label{fig:delta_theta}
\end{figure}


Let us also recall the notions of covering and packing numbers, see Chapter 27 of \cite{BOO:SB14} for details. We adapt the classical definitions to the unit sphere.
\begin{definition}
Given $\epsilon\in (0,1)$, a set $Z \subset \mathbb{S}$ is called an \emph{$\epsilon$-covering} of $\mathbb{S}$ if, for any $x\in \mathbb{S}$, there exists $z\in Z$ such that $|z^\top x| \ge \cos(\theta)$ where $\theta=\delta^{-1}(\epsilon)$.
The \emph{covering number} $\mathcal{N}_c(\epsilon)$ is the minimal cardinality of an $\epsilon$-covering of $\mathbb{S}$.
\end{definition}

\begin{definition}
Given $\epsilon\in (0,1)$, a set $Z \subset \mathbb{S}$ is called an \emph{$\epsilon$-packing} of $\mathbb{S}$ if, for any two $z,v\in Z$,  $|z^\top v| < \cos(\theta)$ where $\theta=\delta^{-1}(\epsilon)$. The \emph{packing number} $\mathcal{N}_p(\epsilon)$ is the maximal cardinality of an $\epsilon$-packing of $\mathbb{S}$.
\end{definition}
With these definitions, we also adapt fundamental results on set covering and packing (see \cite[Chapter 27]{BOO:SB14}) to the unit sphere, as stated below. 
\begin{lemma}\label{lem:covering}
For any $\epsilon\in (0,1)$,
\begin{align}
\mathcal{N}_c(\epsilon) \le \mathcal{N}_p(\epsilon) \le \frac{1}{\delta(\frac12\delta^{-1}(\epsilon))}.
\end{align}
\end{lemma}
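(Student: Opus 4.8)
The plan is to prove the two inequalities separately, using the classical packing--covering duality transported to the ``symmetric'' geometry of $\mathbb{S}$ in which antipodal points are identified. Throughout, write $\theta=\delta^{-1}(\epsilon)\in(0,\pi/2)$.

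For the left inequality $\mathcal{N}_c(\epsilon)\le\mathcal{N}_p(\epsilon)$, I would take an $\epsilon$-packing $Z$ of maximal cardinality $\mathcal{N}_p(\epsilon)$ and argue that it is automatically an $\epsilon$-covering. Indeed, if it were not, there would exist $x\in\mathbb{S}$ with $\lvert z^Tx\rvert<\cos\theta$ for every $z\in Z$; but then $Z\cup\{x\}$ is still an $\epsilon$-packing (every new pair consists of $x$ and some $z\in Z$, and the corresponding inner products are $<\cos\theta$ by the choice of $x$), contradicting maximality. Hence any maximal packing covers $\mathbb{S}$, so $\mathcal{N}_c(\epsilon)\le\lvert Z\rvert=\mathcal{N}_p(\epsilon)$.

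For the right inequality, I would fix an $\epsilon$-packing $Z$ and consider the halved caps $\SymCap(z,\theta/2)$, $z\in Z$. The crucial claim is that they are pairwise disjoint. Define $d(x,y)\coloneqq\arccos\lvert x^Ty\rvert\in[0,\pi/2]$, the angle between the lines $\mathbb{R}x$ and $\mathbb{R}y$; note that $v\in\SymCap(z,\theta/2)$ iff $d(v,z)\le\theta/2$, and $\lvert z_1^Tz_2\rvert\ge\cos\theta$ iff $d(z_1,z_2)\le\theta$. The function $d$ satisfies the triangle inequality: writing $d(x,y)=\min_{s\in\{\pm1\}}\arccos\!\big(s\,x^Ty\big)$ and combining the geodesic triangle inequality on $\mathbb{S}$ with the fact that the antipodal map is an isometry, one obtains $d(x,z)\le d(x,y)+d(y,z)$ for all $x,y,z$. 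Hence if some $v$ lay in $\SymCap(z_1,\theta/2)\cap\SymCap(z_2,\theta/2)$ with $z_1\neq z_2$, then $d(z_1,z_2)\le d(z_1,v)+d(v,z_2)\le\theta$, i.e.\ $\lvert z_1^Tz_2\rvert\ge\cos\theta$, contradicting that $Z$ is an $\epsilon$-packing. With disjointness in hand, additivity and monotonicity of $\mu$ give $\lvert Z\rvert\,\delta(\theta/2)=\sum_{z\in Z}\mu(\SymCap(z,\theta/2))=\mu\big(\bigcup_{z\in Z}\SymCap(z,\theta/2)\big)\le\mu(\mathbb{S})=1$, so $\lvert Z\rvert\le 1/\delta(\tfrac12\delta^{-1}(\epsilon))$; taking $Z$ of maximal cardinality gives the bound on $\mathcal{N}_p(\epsilon)$.

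The only genuinely geometric point — and hence the main obstacle — is the triangle inequality for $d(x,y)=\arccos\lvert x^Ty\rvert$. It is not the geodesic distance on $\mathbb{S}$ itself, because of the absolute value, so one has to check that the quotient by the antipodal involution preserves the triangle inequality; the short argument is that if $g_1,g_2\in\{\pm1\}$ realize $d(x,y)$ and $d(y,z)$, then $d(x,z)\le d_{\mathbb{S}}(x,g_1g_2 z)\le d_{\mathbb{S}}(x,g_1 y)+d_{\mathbb{S}}(g_1 y,g_1 g_2 z)=d(x,y)+d(y,z)$, where $d_{\mathbb{S}}$ is the geodesic distance and we used that $g_1$ acts isometrically on $\mathbb{S}$. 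Everything else reduces to the maximality trick and to measure-additivity over a disjoint family, which are routine.
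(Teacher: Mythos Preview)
Your proof is correct and follows essentially the same route as the paper: maximality of the packing gives the covering property for the first inequality, and disjointness of the half-angle symmetric caps plus additivity of $\mu$ gives the second. The only difference is that you actually justify the disjointness step via the triangle inequality for the projective distance $d(x,y)=\arccos\lvert x^Ty\rvert$, whereas the paper simply asserts disjointness ``from the definition of an $\epsilon$-packing.''
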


\begin{proof}
The first inequality follows from the fact that any $\epsilon$-packing with maximal cardinality is also an $\epsilon$-covering.
To prove the second inequality, let $Z$ be the $\epsilon$-packing with the maximal cardinality.
Let $\theta=\delta^{-1}(\epsilon)$.
From the definition of an $\epsilon$-packing, the spherical caps $\{\SymCap(z,\theta/2)\}_{z\in Z}$ are disjoint.
Hence, $\sum_{z\in Z} \mu(\SymCap(z,\theta/2))\leq 1$, which leads to the second inequality.
\end{proof}

\begin{remark}
The definitions above are similar to those in \cite{ART:WJ21b}, except that we consider \emph{symmetric spherical caps} in the form of $\{v\in \mathbb{S}: \lvert x^\top v\rvert \ge \cos(\theta)\}$ given any $\theta\in[0,\pi/2]$ and any $x\in\mathbb{S}$, to take into account the symmetry of the problem.
\end{remark}

We then extend the definition of $\epsilon$-covering to the joint set $\mathbb{S}\times \mathcal{M}$ as follows.

\begin{definition}\label{def:joint}
Given $\epsilon\in (0,1)$, a set $\omega \subset \mathbb{S}\times \mathcal{M}$ is called an \emph{$\epsilon$-covering} of $\mathbb{S}\times \mathcal{M}$ if, for any $(x,\sigma)\in \mathbb{S}\times \mathcal{M}$, there exists $(z,\sigma)\in \omega$ such that $|z^\top x| \ge \cos(\theta)$ where $\theta=\delta^{-1}(\epsilon)$.
\end{definition}

The following lemma shows probabilistic properties of the sample set $\omega_N$, which will be needed below in order to achieve formal guarantees on the controller.

\begin{lemma}\label{lem:omegaNcover}
Given $N\in \mathbb{Z}^+$, let $\omega_N=\{(x_i,\sigma_i)\}_{i=1}^N$ be independent and identically distributed (i.i.d) with respect to the uniform distribution $\mathbb{P}$ over $\mathbb{S}\times \mathcal{M}$. Then, given any $\epsilon\in (0,1)$, with probability no smaller than $1-\mathcal{B}(\epsilon;N)$, $\omega_N$ is an $\epsilon$-covering of $\mathbb{S}\times \mathcal{M}$, where 
\begin{align}\label{eqn:mathcalB}
\mathcal{B}(\epsilon;N) &\coloneqq \frac{M\bigg(1-\dfrac{\delta\left(\frac12\delta^{-1}(\epsilon)\right)}{M}\bigg)^N}{\delta(\frac14\delta^{-1}(\epsilon))}.
\end{align}
\end{lemma}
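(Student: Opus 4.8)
The plan is to reduce the (probabilistic) covering property of the random sample $\omega_N$ to a finite union bound over a \emph{deterministic} net of $\mathbb{S}$, whose cardinality is controlled by Lemma~\ref{lem:covering}. Set $\theta\coloneqq\delta^{-1}(\epsilon)$; note $\theta\in(0,\pi/2)$. First I would apply Lemma~\ref{lem:covering} with the parameter $\delta(\theta/2)\in(0,1)$ in place of $\epsilon$: this yields a finite set $C\subset\mathbb{S}$ with $|C|\le 1/\delta(\tfrac14\delta^{-1}(\epsilon))$ such that the symmetric caps $\{\SymCap(c,\theta/2)\}_{c\in C}$ cover $\mathbb{S}$ (this is precisely the statement that $C$ is a $\delta(\theta/2)$-covering in the sense of Definition~1, since $\delta^{-1}(\delta(\theta/2))=\theta/2$).

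The geometric heart of the argument is the following ``chaining'' claim: if for every $c\in C$ and every $\sigma\in\mathcal{M}$ there is a sample $(x_i,\sigma_i)\in\omega_N$ with $\sigma_i=\sigma$ and $x_i\in\SymCap(c,\theta/2)$, then $\omega_N$ is an $\epsilon$-covering of $\mathbb{S}\times\mathcal{M}$. To see this, take any $(x,\sigma)\in\mathbb{S}\times\mathcal{M}$, pick $c\in C$ with $x\in\SymCap(c,\theta/2)$, and let $z$ be the first coordinate of a witnessing sample for the pair $(c,\sigma)$. Flipping the signs of $x$ and of $z$ if necessary (which changes neither membership in a symmetric cap nor the claimed conclusion) we may assume $c^\top x\ge\cos(\theta/2)$ and $c^\top z\ge\cos(\theta/2)$; by the triangle inequality for the geodesic (angular) metric on $\mathbb{S}$, the angle between $x$ and $z$ is at most $\theta\le\pi/2$, hence $z^\top x\ge\cos\theta\ge 0$ and therefore $|z^\top x|\ge\cos\theta$ with $\theta=\delta^{-1}(\epsilon)$, as Definition~\ref{def:joint} requires.

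Finally I would bound the probability of the complementary event. For a fixed pair $(c,\sigma)$ with $c\in C$, $\sigma\in\mathcal{M}$, a single i.i.d.\ draw from $\mathbb{P}$ satisfies $\sigma_i=\sigma$ and $x_i\in\SymCap(c,\theta/2)$ with probability $\tfrac1M\,\delta(\theta/2)$, because the two coordinates are independent, the first uniform on $\mathbb{S}$ and the second uniform on $\mathcal{M}$. Hence the probability that \emph{no} sample among the $N$ draws hits this pair is $\big(1-\tfrac{\delta(\theta/2)}{M}\big)^N$. Taking a union bound over the at most $|C|\cdot M\le M/\delta(\tfrac14\delta^{-1}(\epsilon))$ such pairs, and invoking the chaining claim, the probability that $\omega_N$ fails to be an $\epsilon$-covering is at most
\begin{align*}
\frac{M}{\delta(\tfrac14\delta^{-1}(\epsilon))}\Big(1-\tfrac{\delta(\tfrac12\delta^{-1}(\epsilon))}{M}\Big)^N = \mathcal{B}(\epsilon;N),
\end{align*}
which is the desired bound after substituting $\theta=\delta^{-1}(\epsilon)$.

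The only subtle step is the geometric chaining: making two half-angle symmetric caps compose into a single full-angle symmetric cap. The sign-flipping reduction to ordinary spherical angles, together with the observation that $\theta\le\pi/2$ (so that $\cos\theta\ge 0$ and the inequality $x^\top z\ge\cos\theta$ upgrades to $|x^\top z|\ge\cos\theta$), resolves it; the remaining probability estimate is a routine union bound, and the cardinality control on the net is exactly Lemma~\ref{lem:covering}.
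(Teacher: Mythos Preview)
Your proof is correct and follows essentially the same approach as the paper's. Both arguments fix a deterministic $\delta(\tfrac12\delta^{-1}(\epsilon))$-covering of $\mathbb{S}$ of cardinality at most $1/\delta(\tfrac14\delta^{-1}(\epsilon))$ (the paper obtains it as a maximal packing, you via the covering-number bound in Lemma~\ref{lem:covering}; these coincide), then apply a union bound over the $|C|\cdot M$ center--mode pairs and conclude with the same half-angle chaining step; your treatment of the sign-flipping and the condition $\theta\le\pi/2$ is in fact more explicit than the paper's.
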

\begin{proof}
Consider a maximal $\epsilon'$-packing $Z$ of $\mathbb{S}$ with $\epsilon'=\delta(\frac12\delta^{-1}(\epsilon))$ and let $\theta=\delta^{-1}(\epsilon')=\frac12\delta^{-1}(\epsilon)$.
From the proof of Lemma \ref{lem:covering}, $\{\SymCap(z,\theta)\}_{z\in Z}$ covers $\mathbb{S}$.
Suppose $\omega_N$ is sampled randomly according to the uniform distribution, then the probability that each set in $\{\SymCap(z,\theta)\}_{z\in Z}$ contains $M$ points with $M$ different modes is no smaller than $1-\mathcal{N}_p(\epsilon')M(1-\frac{\epsilon'}{M})^N \geq1-\mathcal{B}(\epsilon;N)$.
When this happens, for any $(x,\sigma)\in \mathbb{S} \times \mathcal{M}$, there exists a pair $(z,\sigma) \in \omega_N$ such that $|x^\top z| \ge \cos(2\theta) $.
This completes the proof.
\end{proof}

We are now able to present a key result of this section, which provides a stability certificate from the solution of the sampled problem (\ref{eqn:gammaomegaN}) assuming sufficient covering by the sample set.
\begin{theorem}\label{thm:gamma}
Given a sample set $\omega_N \subset \mathbb{S}\times \mathcal{M}$, consider Problem (\ref{eqn:gammaomegaN}). Let $(\gamma,P,K)$ be a feasible solution to Problem (\ref{eqn:gammaomegaN}). Suppose that $\omega_N$ is an $\epsilon$-covering of $\mathbb{S}\times \mathcal{M}$ for some $\epsilon \in (0,1)$. Then,
\begin{align}\label{eqn:gammastarover}
\rho(\mathcal{A}_{K}) \le \frac{\gamma}{\max\{\varphi_P(\epsilon),\psi_P(\epsilon)\}}
\end{align}
where $\rho(\mathcal{A}_{K})$ is defined in (\ref{eqn:rhoAK}) and 
\begin{align}
	\varphi_P(\epsilon) &\coloneqq 1-\kappa(P)(1-\cos(\delta^{-1}(\epsilon))) \label{eqn:varphiPepsilon}\\
	\psi_P(\epsilon) & \coloneqq \cos\left(\delta_v^{-1}(1-\sqrt{\frac{\det(P)}{\lambda_{\max}(P)^n}}\cos(\delta^{-1}(\epsilon))^n)\right) \label{eqn:psiPepsilon}
\end{align}
with $\delta(\cdot)$ and $\delta_v(\cdot)$ being given in (\ref{eqn:deltatheta}) and \eqref{eqn:deltavtheta} respectively.
\end{theorem}

\begin{proof}
We drop the subscript $N$ in $\omega_N$ in the proof for convenience. Let $P=L^\top L$ be the Cholesky decomposition of $P$, and let 
\begin{align}\label{eqn:omegaNtilde}
\tilde{\omega} \coloneqq \Big\{\Big(\frac{Lz}{\|Lz\|}, \sigma\Big): (z,\sigma) \in \omega \Big\} \subset \mathbb{S}\times \mathcal{M}.
\end{align}
We first show that $\rho(\mathcal{A}_{K}) \le \frac{\gamma}{\varphi_P(\epsilon)}$. The proof is divided into two steps.

\emph{Step~1:}
We show that if $\omega$ is an $\epsilon$-covering of $\mathbb{S}\times \mathcal{M}$, then $\tilde{\omega}$ is an $\tilde{\epsilon}$-covering of $\mathbb{S}\times \mathcal{M}$ for some $\tilde{\epsilon}>0$ defined below, that is, for any $(\tilde{x},\sigma) \in \mathbb{S}\times \mathcal{M}$, we want to show that there exists $(\tilde{z},\sigma)\in \tilde{\omega}$ such that $|\tilde{z}^\top \tilde{x}| \ge\cos(\tilde\theta)$ where $\tilde\theta=\delta^{-1}(\tilde{\epsilon})$.
Note that any $\tilde{x} \in \mathbb{S}$ can be uniquely expressed as $\tilde{x} =  Lx/ \|Lx\|$ for some $x\in \mathbb{S}$.
Let $\tilde{x}=Lx/\|Lx\|\in\mathbb{S}$.
Since $\omega$ is an $\epsilon$-covering of $\mathbb{S}\times \mathcal{M}$, from the definition, there exists $(z,\sigma)\in \omega$ such that $\lvert x^\top z\rvert \ge\cos(\theta)$ where $\theta=\delta^{-1}(\epsilon)$, which implies that $\|x-z\|\le \sqrt{2-2\cos(\theta)}$ or $\|x+z\|\le \sqrt{2-2\cos(\theta)}$.
Now, let us look at the value $\lvert\tilde{x}^\top \tilde{z}\rvert$ where $\tilde{z}=Lz/\|Lz\|\in\tilde\omega$. 
Without loss of generality, we consider the case that $\|x-z\|\le \sqrt{2-2\cos(\theta)}$.
Hence, 
\begin{align}
\frac{|(Lx)^\top Lz|}{\|Lx\| \|Lz\|} &= \frac{|x^\top Pz|}{\|Lx\| \|Lz\|} \nonumber \\
&= \frac{|x^\top Px + z^\top P z - (x-z)^\top P (x-z)|}{2\|Lx\| \|Lz\|} \nonumber \\
& \ge \frac{x^\top Px + z^\top P z - |(x-z)^\top P (x-z)|}{2 \sqrt{x^\top Px} \sqrt{z^\top P z}} \nonumber \\
& \ge 1- \frac{|(x-z)^\top P (x-z)|}{2 \sqrt{x^\top Px} \sqrt{z^\top P z}} \label{eqn:sqrtxPxzPZ}\\
& \ge 1 - \frac{\|x-z\|^2  \lambda_{\max}(P)}{2 \lambda_{\min}(P)} \label{eqn:sqrtxzPxz} \\
& \ge  1 - \kappa(P) (1-\cos(\theta)) = \varphi_P(\epsilon)
\end{align}
where (\ref{eqn:sqrtxPxzPZ}) follows from the fact that $x^\top Px + z^\top P z \ge 2\sqrt{x^\top Px} \sqrt{z^\top Pz}$ and (\ref{eqn:sqrtxzPxz}) is a direct consequence of the inequality $\lambda_{\min}(P) I \preceq P \preceq \lambda_{\max}(P) I $. Hence, $\tilde{\omega}_N$ is  a $\tilde{\epsilon}$-covering of $\mathbb{S}\times \mathcal{M}$ with $\tilde\epsilon=\delta(\tilde\theta)$ and $\cos(\tilde\theta) = 1 - \kappa(P) (1-\cos(\theta))$.

\emph{Step~2:}
Now, we are in a position to show $\rho(\mathcal{A}_{K}) \le \frac{\gamma}{\varphi(P,\epsilon)}$. Let us define:
\begin{align}\label{eqn:tildeomegasigma}
\tilde{\omega}^{\sigma}\coloneqq\{x: (x,\sigma) \in \tilde{\omega} \},\: \forall\, \sigma \in \mathcal{M}.
\end{align}
From Definition \ref{def:joint}, we know that $\tilde{\omega}^{\sigma}$ is a $\tilde{\epsilon}$-covering of $\mathbb{S}$ for all $\sigma \in \mathcal{M}$. This implies that $\cos(\tilde\theta) \mathbb{B} \subseteq \textrm{conv} (\pm \tilde{\omega}^{\sigma})$ for all $\sigma \in \mathcal{M}$, where $\pm \tilde{\omega}^{\sigma}$ denotes the union of $\tilde{\omega}^{\sigma}$ and $-\tilde{\omega}^{\sigma}$, i.e., $\tilde{\omega}^{\sigma}\cup -\tilde{\omega}^{\sigma}$.
Hence,
\begin{align}
\cos(\tilde\theta) \mathbb{B} \subseteq \bigcap_{\sigma \in \mathcal{M}} \textrm{conv} (\pm\tilde{\omega}^{\sigma}). \label{eqn:deltaB}
\end{align}
Let $\tilde{A}_\sigma \coloneqq LA_{\sigma}L^{-1}+LBK L^{-1}$ for all $\sigma \in \mathcal{M}$. It holds that for all $\sigma\in\mathcal{M}$ and $z \in \pm\tilde{\omega}^{\sigma} $, $\|\tilde{A}_\sigma z\| \le \gamma \|z\|$.
This, together with (\ref{eqn:deltaB}), implies that, $\forall\, \sigma \in \mathcal{M}$,
\begin{align*}
\frac{\cos(\tilde\theta)}{\gamma} \tilde{A}_\sigma \mathbb{B} \subseteq \frac{1}{\gamma} \tilde{A}_\sigma \textrm{conv} (\pm\tilde{\omega}^{\sigma}) \subseteq \frac{1}{\gamma} \textrm{conv} (\pm\tilde{A}_\sigma  \tilde{\omega}^{\sigma}) \subseteq \mathbb{B}.
\end{align*}
As a consequence, we obtain that, $\forall \sigma \in \mathcal{M}$,
\begin{align}\label{eqn:LMItilde}
(A_{\sigma}+BK)^\top P (A_{\sigma}+BK) \preceq \left(\frac{\gamma}{\cos(\tilde\theta)}\right)^2 P.
\end{align}
Finally, by combining \eqref{eqn:LMItilde} with Proposition \ref{prop:jsrq}, we get that $\frac{\gamma}{\cos(\tilde\theta)} = \frac{\gamma}{\varphi_P(\epsilon)}$ is an upper bound on $\rho(\mathcal{A}_K)$.

We then prove that $\rho(\mathcal{A}_{K}) \le \frac{\gamma}{\psi_P(\epsilon)}$. Similarly, let $\omega^\sigma \coloneqq \{x: (x,\sigma)\in \omega\}, \forall \sigma \in \mathcal{M}$. We consider an arbitrary $\sigma \in \mathcal{M}$. Since $\omega^\sigma$ is an $\epsilon$-covering of $\mathbb{S}$, $\cos(\delta^{-1}(\epsilon)) \mathbb{B} \subseteq \textrm{conv} (\pm\omega^{\sigma})$. Hence, 
\begin{align}\label{eqn:lambdaconvB}
	\lambda(\textrm{conv} (\pm\omega^{\sigma})) \ge \cos(\delta^{-1}(\epsilon))^n \lambda(\mathbb{B}),
\end{align}
where $\lambda(\cdot)$ denotes the Lebesgue measure. Note that $\tilde{\omega}^{\sigma}$ as defined in \eqref{eqn:tildeomegasigma} can be expressed as $\tilde{\omega}^{\sigma} = \{\frac{Lz}{\|Lz\|}: z\in \omega^{\sigma}\}$, which leads to the following relation 
\begin{align}\label{eqn:convL}
	\textrm{conv} (\pm\tilde{\omega}^{\sigma}) \supseteq \frac{L}{\sqrt{\lambda_{\max}(P)}} \textrm{conv} (\pm\omega^{\sigma}).
\end{align}
Combining \eqref{eqn:lambdaconvB} and \eqref{eqn:convL} yields 
\begin{align}
\frac{\lambda(\textrm{conv} (\pm\tilde{\omega}^{\sigma})}{\lambda(\mathbb{B})} \ge \sqrt{\frac{\det(P)}{\lambda_{\max}(P)^n}} \cos(\delta^{-1}(\epsilon))^n,
\end{align}
which implies that
\begin{align}\label{eqn:lambdaBconv}
	\frac{\lambda(\mathbb{B}\setminus\textrm{conv} (\pm\tilde{\omega}^{\sigma}))}{\lambda(\mathbb{B})} \le 1- \sqrt{\frac{\det(P)}{\lambda_{\max}(P)^n}} \cos(\delta^{-1}(\epsilon))^n.
\end{align}
We claim that the distance from $\partial\left( \textrm{conv} (\pm\tilde{\omega}^{\sigma}) \right)$ to the origin is  bounded from below by $\psi_P(\epsilon) = \cos\left(\delta_v^{-1}(1-\sqrt{\frac{\det(P)}{\lambda_{\max}(P)^n}}\cos(\delta^{-1}(\epsilon))^n)\right)$. We go by contradiction. Suppose there exists a point $x\in \partial\left( \textrm{conv} (\pm\tilde{\omega}^{\sigma}) \right)$ such that $\|x\| < \psi_P(\epsilon)$. Then, there exists a hyperplane $h^\top x =1$ such that $\frac{1}{\|h\|} < \psi_P(\epsilon)$ and $h^\top x \le 1$ for any $x\in  \textrm{conv} (\pm\tilde{\omega}^{\sigma})$. By symmetry, it also holds that $-h^\top x \le 1$ for any $x\in  \textrm{conv} (\pm\tilde{\omega}^{\sigma})$. Let $\tilde{B}$ denote the set $\{x\in \mathbb{B}: h^\top x \ge 1\} \cup \{x\in \mathbb{B}: -h^\top x \ge 1\}$. By construction, $\tilde{B} \subseteq \mathbb{B}\setminus\textrm{conv} (\pm\tilde{\omega}^{\sigma})$, which means that $\lambda(\tilde{B}) \le (1- \sqrt{\frac{\det(P)}{\lambda_{\max}(P)^n}} \cos(\delta^{-1}(\epsilon))^n)\lambda(\mathbb{B})$ from \eqref{eqn:lambdaBconv}. We recall from \cite{ART:Ls11} that the volume of $\tilde{B}$ is $\mathcal{I}(1-\frac{1}{\|h\|^2};\frac{n+1}{2},\frac{1}{2}) \lambda(\mathbb{B})$, which, from the condition that $\frac{1}{\|h\|} < \psi_P(\epsilon)$, implies that $\lambda(\tilde{B}) > \mathcal{I}(1-\psi_P(\epsilon)^2;\frac{n+1}{2},\frac{1}{2}) \lambda(\mathbb{B}) =  (1- \sqrt{\frac{\det(P)}{\lambda_{\max}(P)^n}} \cos(\delta^{-1}(\epsilon))^n)\lambda(\mathbb{B})$, where the equality is by the definition of $\psi_P(\epsilon)$ in \eqref{eqn:psiPepsilon}. This leads to a contradiction. Therefore, we conclude that $\psi_P(\epsilon) \mathbb{B} \subseteq \textrm{conv} (\pm\tilde{\omega}^{\sigma})$. As $\sigma$ is chosen arbitrarily, this implies that
\begin{align*}
\frac{\psi_P(\epsilon)}{\gamma} \tilde{A}_\sigma \mathbb{B} \subseteq \frac{1}{\gamma} \tilde{A}_\sigma \textrm{conv} (\pm\tilde{\omega}^{\sigma}) \subseteq \frac{1}{\gamma} \textrm{conv} (\pm\tilde{A}_\sigma  \tilde{\omega}^{\sigma}) \subseteq \mathbb{B}.
\end{align*}
Thus, $\forall \sigma \in \mathcal{M}$,
\begin{align}\label{eqn:ApsiP}
	(A_{\sigma}+BK)^\top P (A_{\sigma}+BK) \preceq \left(\frac{\gamma}{\psi_P(\epsilon)}\right)^2 P.
\end{align}
Putting \eqref{eqn:LMItilde} and \eqref{eqn:ApsiP} together, we arrive at \eqref{eqn:gammastarover}. 
\end{proof}

The result in Theorem \ref{thm:gamma} allows to establish a probabilistic stability certificate from the solution of the sampled problem (\ref{eqn:gammaomegaN}).

\begin{corollary}\label{cor:bound}
Given $N\in \mathbb{Z}^+$, let $\omega_N=\{(x_i,\sigma_i)\}_{i=1}^N$ be i.i.d with respect to the uniform distribution $\mathbb{P}$ over $\mathbb{S}\times \mathcal{M}$. Let $\gamma(\omega_N),P(\omega_N)$, and $K(\omega_N)$ be obtained from Algorithm \ref{algo:dataK}.
Then, for any $\epsilon\in (0,1)$, with probability no smaller than $1-\mathcal{B}(\epsilon;N)$, 
\begin{align}\label{eqn:gammastaroveromega}
	\rho(\mathcal{A}_{K(\omega_N)}) \le \frac{\gamma(\omega_N)}{\max\{\varphi_{P(\omega_N)}(\epsilon),\psi_{P(\omega_N)}(\epsilon)\}} 
\end{align}
where $\rho(\mathcal{A}_{K(\omega_N)})$ is defined in (\ref{eqn:rhoAK}) with $K = K(\omega_N)$ and $\mathcal{B}(\epsilon;N)$, $\varphi_{P(\omega_N)}(\epsilon)$ and $\psi_{P(\omega_N)}(\epsilon)$ are given in (\ref{eqn:mathcalB}), \eqref{eqn:varphiPepsilon} and \eqref{eqn:psiPepsilon} respectively.
\end{corollary}

\begin{proof}
From Lemma \ref{lem:omegaNcover}, with probability no smaller than $1-\mathcal{B}(\epsilon;N)$, $\omega_N$ is an $\epsilon$-covering of $\mathbb{S}\times \mathcal{M}$. Combining this with Theorem \ref{thm:gamma}, we obtain the result above, since Algorithm \ref{algo:dataK} always generates feasible iterations.
\end{proof}

\begin{remark}\label{rem:boundquad}
The results above bear some similarities with the probabilistic stability guarantees in \cite{ART:KBJT19,INP:RWJ21,INP:BJW21} which are concerned with autonomous systems, the major difference is that the bound in this paper is applicable to any feasible solution while \cite{ART:KBJT19,INP:RWJ21,INP:BJW21} rely on the optimality of the solution. Let us also highlight that the bound in Theorem \ref{thm:gamma} can be considered as an improvement to the one in \cite{INP:WBJ21} in the sense that the additional term $\psi_P(\epsilon)$ prevents the  bound  from going unbounded when $\kappa(P)(1-\cos(\delta^{-1}(\epsilon))) \ge 1$, which happens when $\epsilon$ is not sufficiently small. From a practical point of view, $\epsilon$ has to be small for the bound in \eqref{eqn:gammastarover} to be meaningful. In such cases,  $\varphi_P(\epsilon)$ is often larger than $\psi_P(\epsilon)$. Hence,  the bound that is really used in practice is $\frac{\gamma}{\varphi_P(\epsilon)}$ in \eqref{eqn:gammastarover}. 
\end{remark}

\begin{remark}
	In some practice situations, what we receive is a set of input-state data, i.e., $\{(x_i, u_i, x_i^+): i=1,2.\cdots, N\}$ where $x_i^+ = A_{\sigma_i} x_i + Bu_i$ and $u_i$ is the $i\textsuperscript{th}$ input. As $B$ is known, we can convert this data set into $\{(x_i,x_i^+ -Bu_i): i=1,2.\cdots, N\}$. We can then apply our approach on this converted data set. Furthermore, the states may not lie on the unit sphere. While the solution of the sampled problem in  (\ref{eqn:gammaomegaN}) does not change from a theoretical point of view, we can use the scaled data $\{(x_i/\|x_i\|,(x_i^+ -Bu_i)/\|x_i\|): i=1,2.\cdots, N\}$ to improve numerical stability. If the samples follow an isotropic Gaussian distribution centered at zero (with the covariance matrix being a scalar variance multiplied by the identity matrix) and are generated independently, the scaled points are uniformly distributed on the unit sphere and hence our probabilistic guarantees in Corollary \ref{cor:bound} are still valid.
\end{remark}

\subsection{Parallel computation}
To achieve high confidence in Theorem \ref{them:stabilitysos}, a large number of samples are typically needed. As a result, the problems (\ref{eqn:Kgamma}) and  (\ref{eqn:Pgamma}) at each iteration of Algorithm \ref{algo:dataK} quickly become demanding due to an increasing number of constraints. To circumvent this issue, inspired by the stochastic gradient descent and its variants \cite{BOO:BL17}, we propose a parallelized scheme for Algorithm \ref{algo:dataK} by dividing these constraints into small batches. More precisely, given $L\in \mathbb{Z}^+$, we build $L$ disjoint subsets of $\omega_N$, denoted by $\{\omega_N^i\}_{i=1}^L$, with $\cup_{i=1}^L \omega_N^i = \omega_N$. The choice of $L$ and $\{\omega_N^i\}_{i=1}^L$ depends on the number of computing resources available and their computation power. With this partition, we then solve the alternating minimization problems as defined in (\ref{eqn:Kgamma}) and (\ref{eqn:Pgamma}) individually for each batch, as shown in Algorithm \ref{algo:dataKparallel}. The solution of each subproblem provides a candidate descent direction. We then choose the solution that provides the lowest convergence rate via a line search heuristic (\ref{eqn:gammaell})-(\ref{eqn:hatgamma}), which guarantees feasibility of the iterate for all the constraints. The line search step also guarantees that $\{\gamma_k\}$ does not increase along iterations. Note that  (\ref{eqn:gammaell}) and (\ref{eqn:hatgamma}) are both scalar optimization problems that can be easily solved. The probabilistic guarantee in Corollary \ref{cor:bound} is still applicable as it only requires a feasible solution, though Algorithm \ref{algo:dataKparallel} may produce a more conservative solution $\gamma(\omega_N)$ compared with Algorithm \ref{algo:dataK}.

\begin{algorithm}[h]
	\caption{Parallel alternating minimization for quadratic stabilization}
	\begin{algorithmic}[1]
		\renewcommand{\algorithmicrequire}{\textbf{Input:}}
		\renewcommand{\algorithmicensure}{\textbf{Output:}}
		\REQUIRE $\{(x_i,A_{\sigma_i}x_i)\} _{i=1}^N$, $B$ , and some tolerance $\epsilon_{tol}>0$
		\ENSURE $\gamma(\omega_N),P(\omega_N)$, and $K(\omega_N)$\\
		\textit{Initialization}: Create a partition $\{\omega_N^\ell\}_{\ell=1}^L$ on $\omega_N$; Let $k\leftarrow 0$, $K_k \leftarrow 0$, $P_k \leftarrow I$, and $\gamma_k \leftarrow \max_{(x,\sigma)\in \omega_N} \frac{ \|A_\sigma x\|}{\|x\|}$; 
		\medskip
		\STATEx \textit{Minimization on $P$} \dotfill
		\medskip
		
		\FOR{$\ell=1,2,\cdots, L$}
		\STATE Solve $\bar{\mathcal{P}}(\omega_N^\ell;K_k)$ and let the solution be denoted by $P_k^\ell$;
		\STATE Compute
		\begin{align} \label{eqn:gammaell}
			\bar{\gamma}_k^\ell \leftarrow \min_{\lambda \in [0,1]} \max_{(x,\sigma)\in \omega_N} \frac{ \|A_\sigma x+BK_kx\|_{\bar{P}_k^\ell(\lambda)}}{\|x\|_{\bar{P}_k^\ell(\lambda)}}
		\end{align}
		where $\bar{P}_k^\ell(\lambda) \coloneqq (1-\lambda)P_k + \lambda P_k^\ell$ and let $\bar{\lambda}_k^\ell$ be the solution of (\ref{eqn:gammaell}).
		\ENDFOR
		\STATE Find the minimum among $\{ \bar{\gamma}_k^\ell\}$ and let $\bar{\ell}_k \coloneqq \arg \min_{\ell} \bar{\gamma}_k^\ell$;
		\STATE  Let $P_{k+1} \leftarrow \bar{P}_k^{\bar{\ell}_k}(\bar{\lambda}_k^{\bar{\ell}_k}) $;
		\medskip
		\STATEx \textit{Minimization on $K$} \dotfill
		\medskip
		\FOR{$\ell=1,2,\cdots, L$}
		\STATE Solve $\hat{\mathcal{P}}(\omega_N^\ell;P_{k+1})$ and let the solution be denoted by $K_k^{\ell}$;
		\STATE Compute 
		\begin{align} \label{eqn:hatgamma}
			\hat{\gamma}_{k}^\ell \leftarrow \min_{\lambda \in [0,1]} \max_{(x,\sigma)\in \omega_N} \frac{ \|A_\sigma x+B\hat{K}_{k}^\ell(\lambda)x\|_{P_{k+1}}}{\|x\|_{P_{k+1}}}
		\end{align}
		where $\hat{K}_{k}^\ell(\lambda)\coloneqq(1-\lambda) K_k+\lambda K_k^\ell$, and let $\hat{\lambda}_{k}^\ell$ denote the solution of (\ref{eqn:hatgamma});
		\ENDFOR
		\STATE Find the minimal among $\{ \hat{\gamma}_{k}^\ell\}$ and let $\hat{\ell}_{k} \coloneqq \arg \min_{\ell} \hat{\gamma}_{k}^\ell$;
		\STATE  Let $\gamma_{k+1} \leftarrow   \hat{\gamma}_{k}^{\hat{\ell}_{k}}$ and $K_{k+1} \leftarrow \mathcal{K}_{k}^{\hat{\ell}_{k}}(\hat{\lambda}_{k}^{\hat{\ell}_{k}})$;
		\medskip
		\STATEx \textit{Stopping criterion} \dotfill
		\medskip
		\IF {$\|\gamma_{k+1}-\gamma_k\|< \epsilon_{tol}$}
		\STATE $\gamma(\omega_N)\leftarrow \gamma_{k+1}$, $P(\omega_N)\leftarrow P_{k+1}$, $K(\omega_N)\leftarrow K_{k+1}$, and terminate;
		\ELSE
		\STATE Let $k\leftarrow k+1$ and go to Step 2.
		\ENDIF
	\end{algorithmic}
	\label{algo:dataKparallel}
\end{algorithm}

\section{SOS Lyapunov framework}\label{sec:sos}
Quadratic stabilization of switched systems can be very restrictive in some cases. To reduce conservatism, we can use sum of squares (SOS) techniques, which have already been used in \cite{ART:PJ08,ART:AJPR14} to improve the bound on the JSR. In the framework of data-driven stability analysis, the application of SOS optimization has already proven useful for the case of autonomous systems in \cite{INP:RWJ21}. Here, we want to show that SOS techniques are also applicable for the stabilization problem.

Let us first recall some definitions in SOS optimization. We refer to \cite{ART:PJ08} for the details. Given $x\in \mathbb{R}^n$ and $d\in \mathbb{Z}^+$, let $x^{[d]} \in \mathbb{R}^{{ n+d-1\choose d}})$ denote the $d$-lift of $x$ which consists of all possible monomials of degree $d$, indexed by all the possible exponents $\alpha$ of degree $d$
\begin{align}
	x^{[d]}_{\alpha} = \sqrt{\alpha !} x^{\alpha}
\end{align}
where $\alpha = (\alpha_1,\cdots,\alpha_n)$ with $\sum_{i=1}^n \alpha_i=d$ and $\alpha !$ denotes the multinomial coefficient 
\begin{align}
	\alpha ! \coloneqq \frac{d!}{\alpha_1 ! \cdots \alpha_n !}.
\end{align}
The $d$-lift of a matrix $A\in \mathbb{R}^{n\times n}$ is defined as: $A^{[d]}: x^{[d]} \rightarrow (Ax)^{[d]}$. The following proposition provides an upper bound for the JSR based on SOS Lyapunov functions.

\begin{proposition}[\cite{BOO:J09}, Thm. 2.13] \label{prop:jsrsos}
	Consider the closed-loop matrices $\mathcal{A}_K$ for some state feedback $K\in \mathbb{R}^{m\times n}$. For any $d\in \mathbb{Z}^+ (d\ge 1)$, if there exist $\gamma\ge 0$ and $P \succ 0$ such that, $ \forall\,A\in \mathcal{A_K}$, $x\in \mathbb{S}$, 
	\begin{align}
		((Ax)^{[d]})^\top P (Ax)^{[d]} \le \gamma^{2d} (x^{[d]})^\top P x^{[d]},
	\end{align}
	where $P\in \mathbb{R}^{D\times D}$ with $D = {n+d-1 \choose d}$, then $\rho(\mathcal{A}_K) \le \gamma$.
\end{proposition}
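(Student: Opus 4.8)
The plan is to reduce Proposition~\ref{prop:jsrsos} to the already-established Proposition~\ref{prop:jsrq} by applying it to the \emph{lifted} matrix set $\A_K^{[d]}\coloneqq\{A^{[d]}:A\in\A_K\}$. The key algebraic fact I would invoke is the defining property of the $d$-lift: for every matrix $A\in\R^{n\times n}$ and every $x\in\R^n$ one has $A^{[d]}x^{[d]}=(Ax)^{[d]}$, so that $A^{[d]}$ is a genuine linear map on $\R^D$ with $D={n+d-1\choose d}$ acting on the Veronese embedding $x\mapsto x^{[d]}$. A second fact I would use is that the $d$-lift is multiplicative, $(AB)^{[d]}=A^{[d]}B^{[d]}$, which is immediate from the first identity applied iteratively; consequently $\bar{\pmb A}_{\pmb\sigma(k)}(K)^{[d]}=(A_{\sigma(k-1)}+BK)^{[d]}\cdots(A_{\sigma(0)}+BK)^{[d]}$, i.e.\ products in $\A_K$ lift to products in $\A_K^{[d]}$.

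The argument then proceeds in three steps. First, rewrite the hypothesis: the inequality $((Ax)^{[d]})^\top P(Ax)^{[d]}\le\gamma^{2d}(x^{[d]})^\top Px^{[d]}$ for all $A\in\A_K$ and all $x\in\mathbb S$ is, via the lift identity, exactly $(A^{[d]}y)^\top P(A^{[d]}y)\le\gamma^{2d}\,y^\top Py$ for all $y$ in the Veronese variety $\{x^{[d]}:x\in\mathbb S\}$. I would note that it suffices to have this for $y$ on that variety — not all of $\mathbb S^{D-1}$ — because by homogeneity of degree $d$ in $x$ it extends to all $y=x^{[d]}$, $x\in\R^n$, and these span $\R^D$; but in fact the cleanest route is to observe that the matrix inequality $(A^{[d]})^\top P A^{[d]}\preceq\gamma^{2d}P$ is equivalent to the scalar inequality holding on the whole variety, since $P\succ0$ and $A^{[d]}(\lambda x)^{[d]}=\lambda^d(Ax)^{[d]}$. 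Thus the hypothesis gives $(A^{[d]})^\top P A^{[d]}\preceq(\gamma^d)^2P$ for every $A^{[d]}\in\A_K^{[d]}$. Second, apply Proposition~\ref{prop:jsrq} to the matrix set $\A_K^{[d]}$ with Lyapunov matrix $P\succ0$ and scalar $\gamma^d\ge0$: this yields $\rho(\A_K^{[d]})\le\gamma^d$. Third, relate $\rho(\A_K^{[d]})$ to $\rho(\A_K)$: using multiplicativity of the lift and the fact that for any norm there are constants $0<c\le C$ with $c\|x\|^d\le\|x^{[d]}\|\le C\|x\|^d$, one gets $\|\bar{\pmb A}_{\pmb\sigma(k)}(K)^{[d]}\|\asymp\|\bar{\pmb A}_{\pmb\sigma(k)}(K)\|^d$ up to multiplicative constants independent of $k$, hence $\rho(\A_K^{[d]})=\rho(\A_K)^d$ after taking $k$-th roots and letting $k\to\infty$. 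Combining, $\rho(\A_K)^d=\rho(\A_K^{[d]})\le\gamma^d$, so $\rho(\A_K)\le\gamma$.

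The main obstacle — really the only non-bookkeeping point — is making the reduction to the unit sphere rigorous, i.e.\ justifying that the hypothesis stated only for $x\in\mathbb S$ genuinely controls the spectral behavior of $A^{[d]}$ as an operator on all of $\R^D$. The subtlety is that the Veronese variety $\{x^{[d]}:x\in\mathbb S\}$ is a proper (measure-zero) subset of $\mathbb S^{D-1}$, so one cannot directly conclude $(A^{[d]})^\top PA^{[d]}\preceq\gamma^{2d}P$ from a pointwise bound on that variety alone; what saves us is precisely that $A^{[d]}$ \emph{preserves} the variety ($A^{[d]}x^{[d]}=(Ax)^{[d]}$), so the relevant norm/radius estimates only ever need to be evaluated on the variety, and the equivalence $\rho(\A_K^{[d]})=\rho(\A_K)^d$ is the clean way to package this. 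I would state this equivalence as a small lemma (or cite it — it is standard, e.g.\ from \cite{BOO:J09,ART:PJ08}) rather than reprove it. The remaining verifications — multiplicativity of the $d$-lift and the norm-equivalence constants — are routine and I would dispatch them in a sentence each.
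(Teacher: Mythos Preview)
The paper does not prove this proposition at all; it is quoted from \cite{BOO:J09} and stated without proof, so there is no ``paper's own proof'' to compare against. Your proposal is the standard argument and is essentially correct, but there is one genuine slip in the body that you yourself flag (and then half-repair) in the obstacle paragraph.

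The slip is in Step~1/Step~2: you assert that the pointwise inequality on the Veronese variety implies the full matrix inequality $(A^{[d]})^\top P A^{[d]}\preceq\gamma^{2d}P$, justifying this by ``$P\succ0$'' and homogeneity. That implication is false in general. The condition $y^\top Q y\ge0$ for all $y=x^{[d]}$ says only that the degree-$2d$ polynomial $(x^{[d]})^\top Qx^{[d]}$ is nonnegative; it does \emph{not} force the particular Gram matrix $Q=\gamma^{2d}P-(A^{[d]})^\top PA^{[d]}$ to be positive semidefinite (nonnegativity versus sum-of-squares; the Gram representation is non-unique). Spanning of $\R^D$ by the variety is irrelevant here because the inequality is quadratic, not linear, in $y$. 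Consequently your Step~2 (invoke Proposition~\ref{prop:jsrq} on $\A_K^{[d]}$) is not justified as written.

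Your obstacle paragraph already contains the correct repair, but you should make it the main argument rather than a caveat: drop the appeal to Proposition~\ref{prop:jsrq} at the lifted level entirely and iterate the hypothesis directly on the variety. Since $A^{[d]}x^{[d]}=(Ax)^{[d]}$, for any length-$k$ product $\bar A\in\A_K^k$ one gets $((\bar Ax)^{[d]})^\top P(\bar Ax)^{[d]}\le\gamma^{2dk}(x^{[d]})^\top Px^{[d]}$ for all $x$; combining $\lambda_{\min}(P)\|\cdot\|^2\le(\cdot)^\top P(\cdot)\le\lambda_{\max}(P)\|\cdot\|^2$ with the identity $\|x^{[d]}\|=\|x\|^d$ (multinomial theorem, using the $\sqrt{\alpha!}$ normalization) yields $\|\bar Ax\|\le\kappa(P)^{1/(2d)}\gamma^k\|x\|$, hence $\rho(\A_K)\le\gamma$ after taking $k$th roots and the limit. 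This is exactly the content of $\rho(\A_K^{[d]})=\rho(\A_K)^d$ that you cite, and it bypasses the unjustified matrix inequality.
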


\subsection{Data-driven SOS stabilization}
In the model-free case, we formulate the following sampled problem using the given data set $\omega_N$:
\begin{subequations}\label{eqn:Psosomega}
	\begin{align}
	\mathcal{P}_d(\omega_N): 	&\min_{\gamma\ge 0, P\succeq I, K} \gamma\\
		\textrm{s.t.} \quad  & ((A_\sigma x+BKx)^{[d]})^\top P (A_\sigma x+BKx)^{[d]} \nonumber \\
		&\le \gamma^{2d} (x^{[d]})^\top P x^{[d]},\:  \forall\,(x,\sigma) \in \omega_N \label{eqn:ABKxPd}
	\end{align}
\end{subequations}
From the definition of $d$-lift of vectors above, this is a polynomial optimization problem with $mn+{ n+d-1\choose d} $ variables and $N$ polynomial constraints, which is much more computationally demanding than Problem (\ref{eqn:gammaomegaN}) depending on the degree $d$. For a small data set, this problem can be solved by polynomial toolboxes \cite{ART:L01,ART:HL03,sostools} or general nonlinear solvers, such as IPOPT \cite{INP:W09} and NLopt \cite{MISC:Johnson14}. For a large data set, to reduce the complexity, we again make use of the structure in (\ref{eqn:ABKxPd}) to develop an alternating minimization algorithm between $P$ and $K$ as Algorithm \ref{algo:dataK} for the quadratic case. For a given $K$, we define:
\begin{subequations}\label{eqn:Kd}
	\begin{align}
		\tilde{\mathcal{P}}_d(\omega_N;K): 	&\min_{\gamma, P\succeq I} \gamma\\
		\textrm{s.t.} \quad  & ((A_\sigma x+BKx)^{[d]})^\top P (A_\sigma x+BKx)^{[d]} \nonumber \\
		&\le \gamma^{2d} (x^{[d]})^\top P x^{[d]},\:  \forall\,(x,\sigma) \in \omega_N
	\end{align}
\end{subequations}
For a given $P$, we define:
\begin{subequations}\label{eqn:Pd}
	\begin{align}
		\hat{\mathcal{P}}_d(\omega_N;P): 	&\min_{\gamma\ge 0, K} \gamma\\
		\textrm{s.t.} \quad  & ((A_\sigma x+BKx)^{[d]})^\top P (A_\sigma x+BKx)^{[d]} \nonumber \\
		&\le \gamma^{2d} (x^{[d]})^\top P x^{[d]},\:  \forall\,(x,\sigma) \in \omega_N
	\end{align}
\end{subequations}
Indeed, one may observe that Problem (\ref{eqn:Kd}) can be solved efficiently by using SDP solvers (like Mosek \cite{INC:AA00}) and bisection on $\gamma$. While Problem (\ref{eqn:Pd}) is still a polynomial optimization problem, there are only $mn$ decision variables, which makes it easier to handle than Problem (\ref{eqn:Psosomega}). To solve the problems in (\ref{eqn:Kd}) and (\ref{eqn:Pd}), we typically need an initial solution, in particular for the polynomial problem (\ref{eqn:Pd}). In this alternating minimization algorithm, we set the initial solution to be the solution from the previous iteration, which leads to a convergent sequence of $\gamma$. The details of this procedure is given in Algorithm \ref{algo:dataKsos}.

\begin{algorithm}[h]
	\caption{Alternating minimization for SOS stabilization}
	\begin{algorithmic}[1]
		\renewcommand{\algorithmicrequire}{\textbf{Input:}}
		\renewcommand{\algorithmicensure}{\textbf{Output:}}
		\REQUIRE $\{(x_i,A_{\sigma_i}x_i)\}_{i=1}^N$, $B$, $d$ and some tolerance $\epsilon_{tol}>0$
		\ENSURE $\gamma_{sos}(\omega_N),P_{sos}(\omega_N)$, and $K_{sos}(\omega_N)$\\
		\textit{Initialization}: $k\leftarrow 0$, $K_k \leftarrow 0$, $P_k \leftarrow I$, and $\gamma_k \leftarrow \max_{(x,\sigma)\in \omega_N} \frac{ \|(A_\sigma x)^{[d]}\|}{\|x^{[d]}\|}$; 
		\STATE Obtain $P_{k+1}$ by solving $\tilde{\mathcal{P}}_d(\omega_N;K_k)$ via bisection on $\gamma$ starting from $ \max_{(x,\sigma)\in \omega_N} \frac{ \|(A_{\sigma} x+BK_kx)^{[d]}\|_{P_k}}{\|x^{[d]}\|_{P_k}}$;
		\STATE Obtain $\gamma_{k+1}$ and $K_{k+1}$ by solving $\tilde{\mathcal{P}}_d(\omega_N;P_{k+1})$ initialized at $K=K_k$;
		\IF {$\|\gamma_{k+1}-\gamma_k\|< \epsilon_{tol}$}
		\STATE $\gamma_{sos}(\omega_N)\leftarrow \gamma_{k+1}$, $P_{sos}(\omega_N)\leftarrow P_{k+1}$, $K_{sos}(\omega_N)\leftarrow K_{k+1}$;
		\STATE Terminate;
		\ELSE
		\STATE Let $k\leftarrow k+1$ and go to Step 2.
		\ENDIF
	\end{algorithmic}
	\label{algo:dataKsos}
\end{algorithm}

\subsection{Stability guarantees via sensitivity analysis}\label{sec:sosstable}
With the aforementioned alternating minimization algorithm for SOS stabilization, we get a feasible solution, denoted by $(\gamma_{sos}(\omega_N),P_{sos}(\omega_N), K_{sos}(\omega_N))$. Similar to Section \ref{sec:prob}, we now derive stability guarantees for this solution. However, due to the polynomial lifting in (\ref{eqn:Psosomega}), we lose convexity in the Lyapunov function, which prevents us from applying the geometric results in Section \ref{sec:prob} to the SOS framework. In particular, the reasoning about (\ref{eqn:LMItilde}) and (\ref{eqn:ApsiP}) in the proof of Theorem \ref{thm:gamma} does not hold. Hence, we use an alternative way to derive probabilistic guarantees on the solution obtained from Algorithm \ref{algo:dataKsos}.

Given any $\epsilon \in (0,1)$ and $d\in \mathbb{Z}^+$ ($d\ge 1$), let us define
\begin{align}\label{eqn:phiepd}
	\phi(\epsilon,d) \coloneqq \sum_{k=1}^{d} {d \choose k} \left( 2-2\cos\left(\delta^{-1}(\epsilon)\right) \right)^{\frac{k}{2}}
\end{align}
Similar to Theorem \ref{thm:gamma}, we derive a stability certificate for SOS stabilization as stated in the following theorem. 
\begin{theorem}\label{thm:AKsos}
Given a sample set $\omega_N \subset \mathbb{S}\times \mathcal{M}$ and an integer $d \ge 1$, consider Problem (\ref{eqn:Psosomega}). Let $(\gamma,P, K)$ be a feasible solution to Problem (\ref{eqn:Psosomega}). Suppose $\omega_N$ is an $\epsilon$-covering of $\mathbb{S}\times \mathcal{M}$ for some $\epsilon >0$. Then,
\begin{align} \label{eqn:rhoAKPsos}
	\rho(\mathcal{A}_{K}) \le & \sqrt[d]{\gamma^d  +  (\gamma^d+\bar{\rho}(\mathcal{A}_K)^d) \sqrt{\kappa(P)} \phi(\epsilon,d)}
\end{align}
where $\rho(\mathcal{A}_{K})$ is defined in (\ref{eqn:rhoAK}), $\phi(\epsilon,d)$ is given in (\ref{eqn:phiepd}), and
\begin{align}\label{eqn:barrhoA}
	\bar{\rho}(\mathcal{A}_K) \coloneqq \max_{A\in \mathcal{A}_K} \|A\|. 
\end{align}
\end{theorem}
\begin{proof}
We drop the subscript $N$ in $\omega_N$ in the proof for convenience. Since $\omega$ is an $\epsilon$-covering of $\mathbb{S}\times \mathcal{M}$, from Definition \ref{def:joint}, for any $(x,\sigma)\in \mathbb{S}\times \mathcal{M}$, there exists $(z,\sigma)\in \omega$ such that $|z^\top x| \ge \cos(\delta^{-1}(\epsilon))$, which implies that  $\|x-z\|\le \sqrt{2-2\cos(\delta^{-1}(\epsilon))}$ or $\|x+z\|\le \sqrt{2-2\cos(\delta^{-1}(\epsilon))}$. Due to the fact that $(-z,\sigma)$ also satisfies (\ref{eqn:ABKxPd}), we consider the case that $\|x-z\|\le \sqrt{2-2\cos(\delta^{-1}(\epsilon))}$ without loss of generality. From \cite{ART:PJ08}, it can be shown that 
\begin{align*}
		\|x^{[d]}-z^{[d]}\| &= \|x^{\otimes d}-z^{\otimes d}\| \le \sum_{k=1}^{d} {d \choose k} \|x-z\|^k \|z\|^{d-k} \\
		&\le \sum_{k=1}^{d} {d \choose k} \left( 2-2\cos(\delta^{-1}(\epsilon)) \right)^{\frac{k}{2}}
\end{align*}
where $x^{\otimes d}$ denotes the $d$-fold Kronecker product. With this and some manipulations, we get that
\begin{align*}
		&\|(A_\sigma x + BKx)^{[d]}\|_{P}\\
		\le & \|(A_\sigma z + BKz)^{[d]} + (A_\sigma + BK)^{[d]}(x^{[d]}-z^{[d]})\|_{P}\\
		\le & \|(A_\sigma z + BKz)^{[d]}\|_{P}\\
		&+\sqrt{\lambda_{\max}(P)} \|A_\sigma + BK\|^d \|x^{[d]}-z^{[d]}\|\\
		\le &  \gamma^d \|z^{[d]}\|_{P} + \sqrt{\lambda_{\max}(P)} \|A_\sigma + BK\|^d \phi(\epsilon,d)\\
		\le &  \gamma^d \|z^{[d]}\|_{P}+\sqrt{\lambda_{\max}(P)} \bar{\rho}(\mathcal{A}_K)^d \phi(\epsilon,d)\\
		\le &  \gamma^d \left(\|x^{[d]}\|_{P} + \sqrt{\lambda_{\max}(P)} \phi(\epsilon,d) \right)\\
		& +\sqrt{\lambda_{\max}(P)} \bar{\rho}(\mathcal{A}_K)^d \phi(\epsilon,d)\\
		\le &  \left(\gamma^d   + (\gamma^d+\bar{\rho}(\mathcal{A}_K)^d)  \sqrt{\kappa(P)} \phi(\epsilon,d) \right)\|x^{[d]}\|_{P}.
\end{align*}
From Proposition \ref{prop:jsrsos}, we then obtain (\ref{eqn:rhoAKPsos}).
\end{proof}

\begin{remark}
	When $d=1$, it becomes exactly the quadratic case, which means that Theorem \ref{thm:AKsos} provides an alternative bound for the quadratic case. However, by numerical simulation, for reasonable values of $P$ and $\epsilon$, the bound in (\ref{eqn:gammastarover}) is better than the one in (\ref{eqn:rhoAKPsos}) with $d=1$. In addition, Theorem \ref{thm:AKsos} requires the information of $\bar{\rho}(\mathcal{A}_K)$, which is yet to be estimated.
\end{remark}

Following the same arguments as in Section \ref{sec:prob}, we can then establish probabilistic guarantees for this data-driven SOS framework. However, the bound in (\ref{eqn:rhoAKPsos}) relies on $\bar{\rho}(\mathcal{A}_K)$ which is not available. To handle this issue, we use the data set to estimate $\bar{\rho}(\mathcal{A}_K)$. Given a sample set $\omega_N \subset \mathbb{S}\times \mathcal{M}$ and any $K\in \mathbb{R}^{m\times n}$, we define the following problem:
\begin{subequations}\label{eqn:etaN}
\begin{align}
	\eta^*(\omega_N,K) \coloneqq &\min_{\eta \ge 0} \eta\\
	 \textrm{s.t.}\quad  &\|A_{\sigma}x+BKx\| \le \eta, \forall (x,\sigma) \in \omega_N.
\end{align}
\end{subequations}
An upper bound on $\bar{\rho}(\mathcal{A}_K)$ is then given in the following proposition. 

\begin{proposition}\label{prop:normest}
Given a sample set $\omega_N \subset \mathbb{S}\times \mathcal{M}$ and any $ K \in \mathbb{R}^{m\times n}$, let $\eta^*(\omega,K)$ be defined as in (\ref{eqn:etaN}). Suppose $\omega_N$ is an $\epsilon$-covering of $\mathbb{S}\times \mathcal{M}$ for some $\epsilon >0$. Then, 
	\begin{align}\label{eqn:barrhoeta}
	\bar{\rho}(\mathcal{A}_K) \le \frac{\eta^*(\omega,K)}{\cos(\delta^{-1}(\epsilon))}
\end{align}
where $\bar{\rho}(\mathcal{A}_K)$ is given in (\ref{eqn:barrhoA}).
\end{proposition}

\begin{proof}
	This result is a special case of Theorem \ref{thm:gamma} where $P=I$, which implies that $\varphi_P(\epsilon)  = \cos(\delta^{-1}(\epsilon))$.
\end{proof}

Based on the results above, we now present the main result of this section, which is a probabilistic stability certificate from the solution of the sampled problem (\ref{eqn:Psosomega}).

\begin{theorem}\label{them:stabilitysos}
	Given $N\in \mathbb{Z}^+$, let $\omega_N$ be i.i.d with respect to the uniform distribution $\mathbb{P}$ over $\mathbb{S}\times \mathcal{M}$. Let $(\gamma_{sos}(\omega_N),P_{sos}(\omega_N), K_{sos}(\omega_N))$ be obtained from Algorithm \ref{algo:dataKsos} and $\eta^*(\omega_N,K_{sos}(\omega_N))$ be defined as in (\ref{eqn:etaN}). For any $\epsilon \in (0,1)$, with probability no smaller than $1-\mathcal{B}(\epsilon;N)$, 
	\begin{align} \label{eqn:rhoAKPsoseta}
		&\rho(\mathcal{A}_{K_{sos}(\omega_N)}) \\
		\le & \gamma_{sos}(\omega_N) \sqrt[d]{ 1 + \left( 1 + \frac{\bar{\eta}(\omega_N)^d}{\gamma^d_{sos}(\omega_N)} \right)\sqrt{\kappa(P_{sos}(\omega_N))} \phi(\epsilon,d)} \nonumber
	\end{align}
where $\phi(\epsilon,d)$ is given in (\ref{eqn:phiepd}), $\mathcal{B}(\epsilon;N)$ is defined as in (\ref{eqn:mathcalB}), and 
\begin{align}
	\bar{\eta}(\omega_N) \coloneqq \frac{\eta^*(\omega_N,K_{sos}(\omega_N))}{\cos(\delta^{-1}(\epsilon))}
\end{align}
where  $\delta(\cdot)$ denotes the measure of  a \emph{symmetric spherical cap} as shown in (\ref{eqn:deltatheta}).
\end{theorem}
\begin{proof}
	From Lemma \ref{lem:omegaNcover}, $\omega_N$ is an $\epsilon$-covering of $\mathbb{S}\times \mathcal{M}$ with probability no smaller than $1-\mathcal{B}(\epsilon;N)$. Combining Theorem \ref{thm:AKsos}  and Proposition \ref{prop:normest}, we obtain the result above.
\end{proof}

\subsection{Parallelized scheme for SOS stabilization}
Similar to Algorithm \ref{algo:dataKparallel}, we can also use a parallelized scheme for SOS stabilization. In fact, it is even more beneficial to use a parallelized scheme as Algorithm \ref{algo:dataKsos} involves a large number of polynomial constraints of order $2d$ at each iteration. Suppose we build $L$ disjoint subsets $\{\omega_N^i\}_{i=1}^L$ from $\omega_N$ for a given $L\in \mathbb{Z}^+$. We solve the optimization problems in (\ref{eqn:Kd}) and (\ref{eqn:Pd})  with these subsets individually using SDP solvers (like Mosek \cite{INC:AA00}) and nonlinear solvers (like IPOPT \cite{INP:W09} and NLopt \cite{MISC:Johnson14}) respectively. At each iteration,  the solver is initialized with the solution from the last iteration, which, together with the line search heuristics as shown  in (\ref{eqn:tildegammasos}) and (\ref{eqn:hatgammasos}), guarantees a convergent sequence $\{\gamma_k\}_{k\in \mathbb{Z}^+}$. The parallelized scheme for SOS stabilization is given in Algorithm \ref{algo:dataKsosparallel}.

\begin{algorithm}[h]
	\caption{Parallel alternating minimization for SOS stabilization}
	\begin{algorithmic}[1]
		\renewcommand{\algorithmicrequire}{\textbf{Input:}}
		\renewcommand{\algorithmicensure}{\textbf{Output:}}
		\REQUIRE $\{(x_i,A_{\sigma_i}x_i)\} _{i=1}^N$, $B$  and some tolerance $\epsilon_{tol}>0$
		\ENSURE $\gamma_{sos}(\omega_N),P_{sos}(\omega_N)$, and $K_{sos}(\omega_N)$\\
		\textit{Initialization}: Create a partition $\{\omega_N^\ell\}_{\ell=1}^L$ on $\omega_N$; Let $k\leftarrow 0$, $K_k \leftarrow 0$ and $P_k \leftarrow I$; 
		\medskip
		\STATEx \textit{Minimization on $P$} \dotfill
		\medskip
		
		\FOR{$\ell=1,2,\cdots, L$}
		\STATE Solve $\tilde{\mathcal{P}}_d(\omega_N^\ell;K_k)$ and let the solution be denoted by $\tilde{P}(\omega_N^\ell;P_k)$;
		\STATE Compute
		\begin{align} \label{eqn:tildegammasos}
			\tilde{\gamma}_k^\ell \leftarrow \min_{\lambda \in [0,1]} \max_{(x,\sigma)\in \omega_N} \frac{ \|(A_\sigma x+BK_kx)^{[d]}\|_{\mathcal{P}_k^\ell(\lambda)}}{\|x^{[d]}\|_{\mathcal{P}_k^\ell(\lambda)}}
		\end{align}
	    where $\mathcal{P}_k^\ell(\lambda) \coloneqq (1-\lambda)P_k + \lambda \tilde{P}(\omega_N^\ell;P_k)$ and let $\tilde{\lambda}_k^\ell$ be the solution of (\ref{eqn:tildegammasos}).
		\ENDFOR
		\STATE Find the minimal among $\{ \tilde{\gamma}_k^\ell\}$ and let $\tilde{\ell}_k \coloneqq \arg \min_{\ell} \tilde{\gamma}_k^\ell$;
	    \STATE  Let $P_{k} \leftarrow \mathcal{P}_k^{\tilde{\ell}_k}(\tilde{\lambda}_k^{\tilde{\ell}_k})$;
	    \medskip
	    \STATEx \textit{Minimization on $K$} \dotfill
	    \medskip
		\FOR{$\ell=1,2,\cdots, L$}
		\STATE Solve $\hat{\mathcal{P}}_d(\omega_N^\ell;P_k)$ and let the solution be denoted by $\hat{K}(\omega_N^\ell;P_k)$;
		\STATE Compute 
		\begin{align} \label{eqn:hatgammasos}
			\hat{\gamma}_{k}^\ell \leftarrow \min_{\lambda \in [0,1]} \max_{(x,\sigma)\in \omega_N} \frac{ \|(A_\sigma x+B\mathcal{K}_{k}^\ell(\lambda)x)^{[d]}\|_{P_k}}{\|x^{[d]}\|_{P_k}}
		\end{align}
		where $\mathcal{K}_{k}^\ell(\lambda):=(1-\lambda) K_k+\lambda \hat{K}(\omega_N^\ell;P_k)$, and let $\hat{\lambda}_{k}^\ell$ denote the solution of (\ref{eqn:hatgamma});
		\ENDFOR
		\STATE Find the minimal among $\{ \hat{\gamma}_{k}^\ell\}$ and let $\hat{\ell}_{k} \coloneqq \arg \min_{\ell} \hat{\gamma}_{k}^\ell$;
		\STATE  Let $\gamma_{k+1} \leftarrow   \hat{\gamma}_{k}^{\hat{\ell}_{k}}$ and $K_{k+1} \leftarrow \mathcal{K}_{k}^{\hat{\ell}_{k}}(\hat{\lambda}_{k}^{\hat{\ell}_{k}})$;
	    \medskip
		\STATEx \textit{Stopping criterion} \dotfill
		\medskip
		\IF {$\|\gamma_{k+1}-\gamma_k\|< \epsilon_{tol}$}
		\STATE $\gamma_{sos}(\omega_N)\leftarrow \gamma_{k+1}$, $P_{sos}(\omega_N)\leftarrow P_{k+1}$, $K_{sos}(\omega_N)\leftarrow K_{k+1}$, and terminate;
		\ELSE
		\STATE Let $k\leftarrow k+1$ and go to Step 1.
		\ENDIF
	\end{algorithmic}
	\label{algo:dataKsosparallel}
\end{algorithm}

\section{Data-driven switched LQR}\label{sec:LQR}
In this section, we show that the proposed data-driven framework can be extended to infinite-horizon LQR problems of arbitrary switched linear systems. We consider the following infinite-horizon quadratic cost
\begin{align}
	J_{\infty}(\pmb{x},\pmb{u},\pmb{\sigma}) = \sum_{\ell=0}^\infty L(x(\ell),u(\ell))
\end{align}
where $\pmb{x}$, $\pmb{u}$ and $\pmb{\sigma}$ denote the state, control and switching sequences respectively, and $L(x,u) = x^\top Q x+u^\top R u$ is the stage cost with $Q \succ 0$ and $R \succ 0$. With these definitions, the infinite-horizon LQR problem of System (\ref{eqn:Asigma}) can be cast as
\begin{align}\label{eqn:Jstarx}
	J^*(x) \coloneqq \inf_{\pmb{u}} \sup_{\pmb{\sigma}\in \mathcal{M}^\infty} J_{\infty}(\pmb{x},\pmb{u},\pmb{\sigma}) 
\end{align}
with $x(0)=x$. Without any information on the switching signal, we only consider static linear feedback in the infinite-horizon LQR problem. Our goal is to find a quadratic upper bound $x^\top Px$ of $J^*(x)$ with a static feedback $u=Kx$, i.e., we want to find a pair $(K,P)$ such that $J_{\infty}(\pmb{x},\pmb{u},\pmb{\sigma}) \le \|x(0)\|_P^2$ for all $\pmb{\sigma}\in \mathcal{M}^\infty$ with $u(\ell) = Kx(\ell)$ for all $\ell \in \mathbb{Z}^+$. When $(K,P)$ satisfies 
\begin{align}\label{eqn:KPalpha}
	(A+BK)^\top P (A+BK) \preceq P - Q- K^\top RK
\end{align}
for all $A\in \mathcal{A}$, following standard manipulations (see, e.g., \cite{ART:LR06,ART:R06}), it can be shown that, for any switching sequence and any $k\in \mathbb{Z}^+$,
\begin{align}
	\|x(0)\|_P^2 - \sum_{\ell=0}^k L(x(\ell),u(\ell)) \ge \|x(k+1)\|_P^2
\end{align}
with $u(\ell) = Kx(\ell)$, which implies that $J^*(x) \le \|x\|_P^2$ for all $x\in \mathbb{R}^n$. For linear systems (when $ \mathcal{A}$ is a singleton), $(K,P)$ can be obtained by solving the Algebraic Riccati Equation, see \cite[Chapter 2]{BOO:LVS12} for details. For multiple dynamics matrices, using the Schur complement formula with $S = P^{-1}$ and $Y = KS$, a model-based solution of  (\ref{eqn:KPalpha})  can be obtained by solving the following LMI problem, see \cite[Theorem 1]{ART:KBM96},
\begin{subequations}\label{eqn:SQR}
	\begin{align}
		&\min_{S,Y} -\log\det(S)\\
	\textrm{s.t.}	\quad& \begin{pmatrix}
			S & SA^\top+Y^\top B^\top & S & Y^\top \\
			AS+BY & S & \pmb{0} & \pmb{0} \\
			S & \pmb{0} & Q^{-1} & \pmb{0}\\
			Y & \pmb{0} & \pmb{0} & R^{-1}
		\end{pmatrix} \succeq 0, \nonumber\\
	 & \qquad \forall A\in \mathcal{A}.
	\end{align}
\end{subequations}

\subsection{Sampled LQR problem}
In the model-free case, given a sample set $\omega_N \subset \mathbb{S}\times \mathcal{M}$, a sample-based relaxation of (\ref{eqn:KPalpha}) is given as follows
\begin{align}\label{eqn:ABPKQR}
	&(A_{\sigma}x+BKx)^\top P (A_{\sigma}x+BKx) \nonumber \\
	\le  &x^\top( P- Q-K^\top RK)x,  \forall (x,\sigma)\in \omega_N.
\end{align}
However, a solution to (\ref{eqn:ABPKQR}) may not be valid for  (\ref{eqn:KPalpha}) due to the randomness in the sampling. To deal with this issue, we introduce a scaling parameter $\xi\in (0,1)$ and formulate the following problem:
\begin{subequations}\label{eqn:PKQRomega}
	\begin{align}
		&\min_{P, K} ~ \textrm{tr}(P)\\
		\textrm{s.t.} \quad &(A_{\sigma}x+BKx)^\top P (A_{\sigma}x+BKx) \label{eqn:ABKPQRxi}\\
		& \le \xi^2 x^\top( P- Q-K^\top RK)x,  \forall (x,\sigma)\in \omega_N, \nonumber\\
		& P \succeq Q + K^\top R K \label{eqn:alphaPQRK}
	\end{align}
\end{subequations}
where the constraint (\ref{eqn:alphaPQRK}) is imposed as (\ref{eqn:ABKPQRxi}) does not guarantee that $P- Q-K^\top RK$ is positive semidefinite. When the data set is sufficiently rich, the problem (\ref{eqn:PKQRomega}) leads to the following robustness property.

\begin{theorem}\label{thm:Zxikappa}
Given a sample set $\omega_N \subset \mathbb{S}\times \mathcal{M}$, and $\xi \in (0,1)$, let $(P,K)$ be a feasible solution to Problem (\ref{eqn:PKQRomega}) and $Z = P -Q - K^\top R K$. Suppose $\omega_N$ is an $\epsilon$-covering of $\mathbb{S}\times \mathcal{M}$ and $Z$ is invertible. Then,
\begin{align}\label{eqn:ABKPZ}
	&(A_{\sigma}+BK)^\top P(A_{\sigma}+BK) \nonumber\\
	\preceq & \frac{\xi^2}{\max\{\varphi_Z(\epsilon),\psi_Z(\epsilon)\}^2} Z,  \quad \forall \sigma \in \mathcal{M},
\end{align}
where $\varphi_Z(\epsilon)$ and $\psi_Z(\epsilon)$ are given in \eqref{eqn:varphiPepsilon} and \eqref{eqn:psiPepsilon} respectively.
\end{theorem}

\begin{proof}
We drop the subscript $N$ in $\omega_N$ in the proof for convenience. The proof follows similar arguments as in Theorem \ref{thm:gamma}.  Consider the 
Cholesky decomposition of $Z= P -Q - K^\top R K = \bar{L}^\top \bar{L}$, define
\begin{align}
		\overline{\omega} \coloneqq \Big\{\Big(\frac{\bar{L}z}{\|\bar{L}z\|}, \sigma\Big): (z,\sigma) \in \omega \Big\} \subset \mathbb{S}\times \mathcal{M}.
\end{align}
From the same reasoning as in the proof of Theorem \ref{thm:gamma}, it can be shown that  $\overline{\omega}$ is an $\bar{\epsilon}$-covering of $\mathbb{S}\times \mathcal{M}$ with $\bar\epsilon=\delta(\bar\theta)$ and $\cos(\bar\theta) = \varphi_Z(\epsilon)= 1 - \kappa(Z) (1-\cos(\theta))$ where $\theta=\delta^{-1}(\epsilon)$. Again, as in (\ref{eqn:tildeomegasigma}), we define 
\begin{align}
		\overline{\omega}^{\sigma}\coloneqq\{x: (x,\sigma) \in \overline{\omega} \},\: \forall\, \sigma \in \mathcal{M}.
\end{align}
Following (\ref{eqn:deltaB}), we have $\varphi_Z(\epsilon) \mathbb{B} \subseteq \bigcap_{\sigma \in \mathcal{M}} \textrm{conv} (\pm\overline{\omega}^{\sigma})$. With Cholesky decomposition of $P=L^\top L$, it also holds that $\|\bar{A}_\sigma z\| \le \xi \|z\|$ for all $\sigma\in\mathcal{M}$ and $z \in \pm\overline{\omega}^{\sigma}$, where $\bar{A}_\sigma \coloneqq LA_{\sigma}\bar{L}^{-1}+LBK \bar{L}^{-1}$. Then, with (\ref{eqn:LMItilde}), we have, $\forall \sigma \in \mathcal{M}$,
\begin{align*}
		\frac{\varphi_Z(\epsilon)}{\xi} \bar{A}_\sigma \mathbb{B} \subseteq \frac{1}{\xi} \bar{A}_\sigma \textrm{conv} (\pm\overline{\omega}^{\sigma}) \subseteq \frac{1}{\xi} \textrm{conv} (\pm\bar{A}_\sigma  \overline{\omega}^{\sigma}) \subseteq \mathbb{B}.
\end{align*}
Hence, $\|\frac{\varphi_Z(\epsilon)}{\xi} \bar{A}_\sigma x \| \le \|x\|$ for any $x\in \mathbb{R}^n$ and $\sigma \in \mathcal{M}$,  which implies $\bar{A}_\sigma^\top \bar{A}_\sigma \preceq (\frac{\xi}{\varphi_Z(\epsilon)})^2 I$ for any $\sigma \in \mathcal{M}$. Finally, we get 
\begin{align}
	(A_{\sigma}+BK)^\top P(A_{\sigma}+BK) \preceq \frac{\xi^2}{\varphi_Z(\epsilon)^2} Z, \forall \sigma \in \mathcal{M}
\end{align}
which implies (\ref{eqn:ABKPZ}). Following the reasoning above and the arguments in the proof of Theorem \ref{thm:gamma}, we also have
\begin{align}
	(A_{\sigma}+BK)^\top P(A_{\sigma}+BK) \preceq \frac{\xi^2}{\psi_Z(\epsilon)^2} Z, \forall \sigma \in \mathcal{M}.
\end{align}
\end{proof}
From Theorem \ref{thm:Zxikappa}, we also impose the constraint $\max\{\varphi_{P -Q - K^\top R K}(\epsilon), \psi_{P -Q - K^\top R K}(\epsilon)\} \ge \xi$ in addition to (\ref{eqn:ABKPQRxi}) and (\ref{eqn:alphaPQRK}) to ensure that the pair $(K,P)$ is a feasible solution to (\ref{eqn:KPalpha}). As mentioned in Remark \ref{rem:boundquad}, in practice, we only need to consider $\varphi_{P -Q - K^\top R K}(\epsilon)$ as it is often larger for reasonable values of $\epsilon$. Motivated by this fact, we codesign $(P,K)$ and the parameter $\xi$ and modify Problem (\ref{eqn:PKQRomega}) by imposing an additional constraint as follows:
\begin{subequations}\label{eqn:PKQRomeganu}
	\begin{align}
		&\min_{P, K,\xi} ~ \textrm{tr}(P)\\
		\textrm{s.t.} \quad & (\ref{eqn:ABKPQRxi}), (\ref{eqn:alphaPQRK}),  \\
		& \kappa(P -Q - K^\top R K) \le \frac{1-\xi}{1-\cos(\delta^{-1}(\epsilon)}, \label{eqn:PQKRxi}\\
		& 0\le \xi \le \cos(\delta^{-1}(\epsilon)), \label{eqn:xidelta}
	\end{align}
\end{subequations}
where (\ref{eqn:PQKRxi}) is a reformulation of $\varphi_{P -Q - K^\top R K}(\epsilon) \ge \xi$,  (\ref{eqn:xidelta}) is due to the fact that $\kappa(P -Q - K^\top R K)\ge 1$, and $\epsilon$ is a user-defined parameter. Note that the constraint (\ref{eqn:PQKRxi}) is non-convex.


\subsection{Alternating LQR design}
To solve the non-convex problem (\ref{eqn:PKQRomeganu}), we also develop an alternating minimization algorithm.  While the general implementation is similar to the algorithms in the previous sections, the technical details are quite different due to the additional complexity arising from the constraint (\ref{eqn:PQKRxi}). As $K$ may not be a stabilizing feedback in the initial steps, the value of $\xi$ can be larger than $1$, which means that (\ref{eqn:PQKRxi}) is not valid. In view of this, we propose a heuristic in which we relax this constraint with an a-priori upper bound $\bar{\kappa}$ on $\kappa(P -Q - K^\top R K)$ and continuously minimize $\xi$ at each iteration. We then check the constraint (\ref{eqn:PQKRxi}) when $\xi$ is less than $1$. Given any $\epsilon \in (0,1)$ and $\alpha \ge 0$, let us define
\begin{align}\label{eqn:xistarep}
	\xi^*(\epsilon,\alpha)\coloneqq 1 - \alpha(1-\cos(\delta^{-1}(\epsilon))).
\end{align}
With this definition, it can be verified that, when $\kappa(P -Q - K^\top R K) \le \bar{\kappa}$ and $\xi \le \xi^*(\epsilon,\bar{\kappa})$, the constraint (\ref{eqn:PQKRxi}) is satisfied. 

We now present the overall procedure. At the initialization, we find the value of $\xi$ that is closest to $1$ such that (\ref{eqn:ABKPQRxi}) is feasible with $K=\pmb{0}$, denoted by $\xi_0$. Note that, given the a-priori upper bound $\bar{\kappa}$, the constraint $\kappa(P -Q - K^\top R K) \le \bar{\kappa}$ can be rewritten as $\lambda_{\max}(P -Q - K^\top R K) \le \bar{\kappa} \nu$ and $\lambda_{\min}(P -Q - K^\top R K) \ge \nu$ for some $\nu >0$, which are equivalent to $\nu I \preceq P -Q - K^\top R K \preceq \bar{\kappa} \nu I$. We then proceed by optimizing $(K,\xi)$ and $P$ in an alternating way. In the optimization of $(K,\xi)$, the goal is to decrease $\xi$ to a certain value below $1$ but not to minimize $\xi$ as much as possible. For this reason, we penalize the distance of the current $\xi$ to the previous value at each iteration in order to generate a smooth sequence $\{\xi_k\}$. Even when $P$ is fixed, the constraints  (\ref{eqn:ABKPQRxi}) and (\ref{eqn:alphaPQRK}) are still nonlinear. Again, we use the Schur complement formula to convert these constraints into LMIs as follows:
\begin{align}
& \begin{pmatrix}
		x^\top Px-x^\top Qx & (A_\sigma x+BKx)^\top P  & x^\top K^\top \\ 
		P(A_\sigma x+BKx) & \xi^2 P & \pmb{0}\\
		Kx & \pmb{0} & R^{-1}
	\end{pmatrix} \nonumber\\
	& \qquad \succeq 0,  \qquad    \forall (x,\sigma)\in \omega_N, \label{eqn:ABKPQRxiLMI}\\
	& \begin{pmatrix}
		P - Q & K^\top\\
		K & R^{-1}
	\end{pmatrix} \succeq   0.  \label{eqn:nualphaPLMI}
\end{align}
The optimization problem involving $(K,\xi)$ is formulated in (\ref{eqn:PKQRomegaLMInuK}). Replacing $\xi^2$ with a new variable $\zeta$ yields a convex problem. To ensure the constraint $\kappa(P -Q - K^\top R K) \le \bar{\kappa}$ at each iteration, the optimization of $(K,\xi)$ is followed by a backtracking step. After $K$ and $\xi$ are updated, we optimize over $P$ subject to the constraints (\ref{eqn:ABKPQRxi}),  (\ref{eqn:alphaPQRK}) and (\ref{eqn:PQKRKbark}). The details are given in Algorithm \ref{algo:dataKLQR}. This algorithm can also be parallelized following the same idea as in Algorithm \ref{algo:dataKparallel} and Algorithm \ref{algo:dataKsosparallel}.

\begin{algorithm}[h]
	\caption{Alternating minimization for LQR design}
	\begin{algorithmic}[1]
		\renewcommand{\algorithmicrequire}{\textbf{Input:}}
		\renewcommand{\algorithmicensure}{\textbf{Output:}}
		\REQUIRE $\{(x_i,A_{\sigma_i}x_i)\}_{i=1}^N$, $B$, $Q$,$R$, $\bar{\kappa}$, $c$, $\epsilon$ and $\epsilon_{tol}$
		\ENSURE $\xi(\omega_N),P(\omega_N)$, and $K(\omega_N)$\\
		\textit{Initialization}: Let $k\leftarrow 0$ and $K_k \leftarrow \pmb{0}$; Obtain $P_k$ and $\xi_k$ by solving
		\begin{subequations}\label{eqn:PKQRomegaLMinit}
			\begin{align}
				&\min_{P,\xi} \xi\\
				\textrm{s.t.} \quad & (\ref{eqn:ABKPQRxi}), (\ref{eqn:alphaPQRK}),\\
				& \kappa(P -Q-K^\top R K) \le  \bar{\kappa} \label{eqn:PQKRKbark}\\
				& \xi \ge 1 \label{eqn:xik}
			\end{align}
		\end{subequations}
	    with $K=K_k$.
		\STATE Solve
		\begin{subequations}\label{eqn:PKQRomegaLMInuK}
			\begin{align}
				(\bar{K}_k,\bar{\xi}_k) \leftarrow &\arg\min_{K,\xi} \xi^2+ c(\xi^2-\xi^2_k)^2\\
				\textrm{s.t.} \quad & (\ref{eqn:ABKPQRxiLMI}), (\ref{eqn:nualphaPLMI}),\\
				& \xi \ge \xi^*(\epsilon,\bar{\kappa}),
			\end{align}
		\end{subequations}
	    with $P = P_{k}$;
	    \STATE Obtain the stepsize via backtracking
	    \begin{subequations}
	    \begin{align}
	    	\lambda_k \leftarrow &\arg\min_{\lambda\in [0,1]} \lambda: \\
	    	\textrm{s.t.} \quad &\kappa(P_k-Q-\mathcal{K}_k(\lambda)^\top R\mathcal{K}_k(\lambda)) \le \bar{\kappa}
	    \end{align}
       \end{subequations}
   where $\mathcal{K}_k(\lambda) \coloneqq \lambda K_k+(1-\lambda)\bar{K}_k)$.
        \STATE Let $K_{k+1} = \lambda_k K_k+(1-\lambda_k)\bar{K}_k$ and $\xi_{k+1} = \sqrt{\lambda_k \xi^2_k+(1-\lambda_k)\bar{\xi}_k^2}$;
		\STATE Obtain $P_{k+1}$ by solving
		\begin{subequations}\label{eqn:PKQRomegaLMInuP}
			\begin{align}
				&\min_{P} ~ \textrm{trace}(P)\\
				\textrm{s.t.} \quad &  (\ref{eqn:ABKPQRxi}),  (\ref{eqn:alphaPQRK}), (\ref{eqn:PQKRKbark})
			\end{align}
		\end{subequations}
		with $K=K_{k+1}$ and $\xi = \xi_{k+1}$.
		\IF {$\|\xi_{k+1}-\xi_k\|< \epsilon_{tol}$ or $\kappa(P_{k+1} -Q-K_{k+1}^\top R K_{k+1}) \le \frac{1-\xi_{k+1}}{1-\cos(\delta^{-1}(\epsilon)}$}
		\STATE $\xi(\omega_N)\leftarrow \xi_{k+1}$, $P(\omega_N)\leftarrow P_{k+1}$, $K(\omega_N)\leftarrow K_{k+1}$;
		\STATE Terminate;
		\ELSE
		\STATE Let $k\leftarrow k+1$ and go to Step 1.
		\ENDIF
	\end{algorithmic}
	\label{algo:dataKLQR}
\end{algorithm}

Based on Theorem \ref{thm:Zxikappa}, a probabilistic guarantee is presented below for the proposed data-driven LQR.
\begin{corollary}
	Consider Problem (\ref{eqn:KPalpha}) with $Q,R \succ 0$. Given $N\in \mathbb{Z}^+$, let $\omega_N=\{(x_i,\sigma_i)\}_{i=1}^N$ be i.i.d with respect to the uniform distribution $\mathbb{P}$ over $\mathbb{S}\times \mathcal{M}$. For any $\epsilon\in (0,1)$, we define $\xi^*(\epsilon)$ as in (\ref{eqn:xistarep}). Let $(\xi(\omega_N),P(\omega_N),K(\omega_N))$ be a solution obtained from Algorithm \ref{algo:dataKLQR} and $Z(\omega_N)\coloneqq P(\omega_N) -Q - K(\omega_N)^\top R K(\omega_N)$. Then, with probability no smaller than $1-\mathcal{B}(\epsilon;N)$, the following statement holds: if $\xi(\omega_N) \le \xi^*(\epsilon,\kappa\left(Z(\omega_N)\right))$, $(P(\omega_N),K(\omega_N))$ is a feasible solution to (\ref{eqn:KPalpha}).
\end{corollary}
\begin{proof}
From Lemma \ref{lem:omegaNcover}, with probability no smaller than $1-\mathcal{B}(\epsilon;N)$, $\omega_N$ is an $\epsilon$-covering of $\mathbb{S}\times \mathcal{M}$. Then, according to Theorem \ref{thm:Zxikappa}, when $\xi(\omega_N) \le \xi^*(\epsilon,\kappa\left(Z(\omega_N)\right))$, $(P(\omega_N),K(\omega_N))$ is a feasible solution.
\end{proof}

\begin{remark}
	From (\ref{eqn:PKQRomegaLMInuK}),  the sequence $\{\xi_k\}$ obtained from Algorithm \ref{algo:dataKLQR} is monotonically non-increasing. With the constraint in (\ref{eqn:xik}), the sequence $\{\xi_k\}$ is bounded from below and thus is convergent. 
\end{remark}

\section{Numerical experiments}\label{sec:num}
In this section, we demonstrate the proposed data-driven control framework on several numerical examples. The codes are written in Julia and available at  \href{https://github.com/zhemingwang/DataDrivenSwitchControl}{https://github.com/zhemingwang/DataDrivenSwitchControl}.
 
\subsection{Quadratic stabilization for  multiple examples}
We consider quadratic stabilization for several systems. We first consider an example that can be quadratically stabilized. Consider the following switched linear system with $n=2,m=1$ and $M=3$:
\begin{align*}
	A_1 &= \begin{pmatrix}
		0.7 & 0.16\\
		 1.1 & -1.1
	\end{pmatrix},
	A_2 = \begin{pmatrix}
0.4 & -0.84\\
 0.83 & 0.35
	\end{pmatrix},\\
	A_3 &= \begin{pmatrix}
0.37 & 0.96\\
 0.34 & -1.2
	\end{pmatrix},
	B = \begin{pmatrix}
-0.9\\
 -1.2
	\end{pmatrix}.
\end{align*}
To show that this is not a trivial example, we compute $\rho(\mathcal{A}) =1.544$, the JSR of the open-loop system, using the JSR toolbox \cite{INP:VHJ14}. We also compute $\gamma^* = 0.8756$ as defined in (\ref{eqn:PK}) by solving (\ref{eqn:QY}) with bisection on $\gamma$ to check feasibility of quadratic stabilization for this example.

First, let $N = 2000$ and set the confidence level to $\mathcal{B}(\epsilon;N) = 0.01$. The corresponding value of $\epsilon$ can be computed via bisection using (\ref{eqn:mathcalB}). With this setting, the upper bound in (\ref{eqn:gammastarover}) is valid with probability larger than $99\%$.
For convenience, let
\[
\overline{\gamma}(\omega_N)\coloneqq\frac{\gamma(\omega_N)}{1-\kappa(P(\omega_N))(1-\cos(\delta^{-1}(\epsilon)))}.
\]
We then sample $\omega_N$ according to the uniform distribution on $\mathbb{S} \times \mathcal{M}$ and apply Algorithm \ref{algo:dataK} with the tolerance being $\epsilon_{tol} = 10^{-3}$. The obtained solution is:
\begin{align*}
\gamma(\omega_N) &=0.8836,\: K(\omega_N) =\begin{pmatrix}
0.6626 &  -0.4359
\end{pmatrix},\\
P(\omega_N) &= \begin{pmatrix}
1.1302 & 0.5480\\
0.5480 & 3.3064
\end{pmatrix}.
\end{align*}
The bound obtained from (\ref{eqn:gammastarover}) is $\overline{\gamma}(\omega_N)= 0.8873$. To empirically verify the solution, we compute $ \rho(\mathcal{A}_{K(\omega_N)}) = 0.8767$, the JSR of the closed-loop system with $K(\omega_N)$, using the JSR toolbox \cite{INP:VHJ14}.

We then apply the proposed approach to higher dimensional examples. Again, the confidence level is set to $0.01$, i.e., $\mathcal{B}(\epsilon;N) = 0.01$. We choose different values of $N$ and compute the corresponding $\epsilon$ that satisfies $\mathcal{B}(\epsilon;N) = 0.01$. The dynamics matrices $\mathcal{A}$ and the input matrix $B$ are generated randomly in a way that each entry is chosen from the uniform distribution over $[-1,1]$. Note that these random examples may not be stabilizable by a static linear feedback. Hence, in the simulation, we only compute the upper bound $\overline{\gamma}(\omega_N)$ and compare it to the true solution $\gamma^*$ defined in Problem (\ref{eqn:PKS}). Finally, we divide the data set into a number of subsets of size $1000$ and use the parallelized scheme in Algorithm \ref{algo:dataKparallel}. The results are shown in Figure \ref{fig:bound}. As expected, the sample size needed to reach the true white-box solution increases as the system dimension and the number of modes increase.

\begin{figure}[h]
\centering
\includegraphics[width=0.8\linewidth]{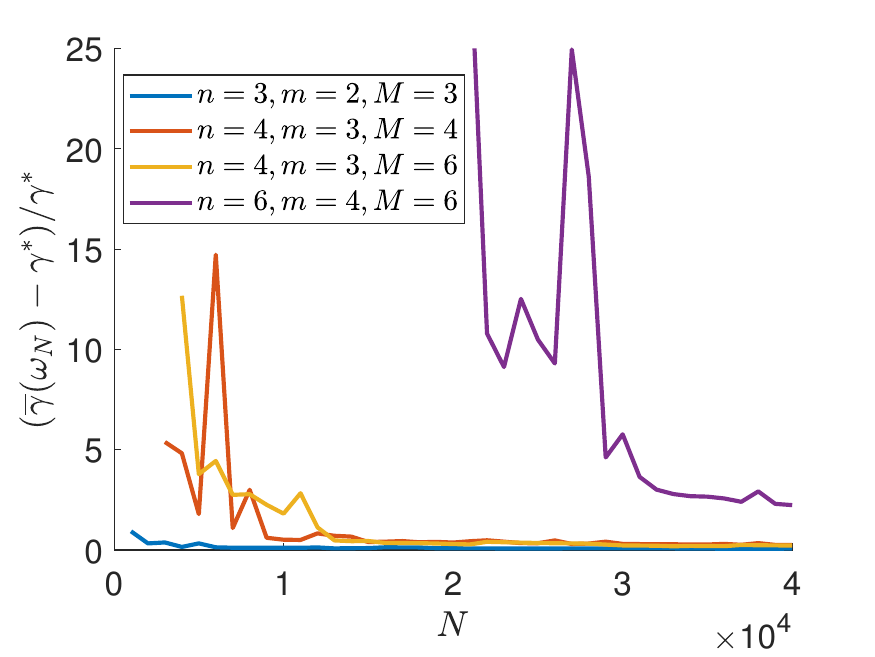}
\caption{Convergence of the sample-based solution to the true solution for systems of different dimensions and modes.}
\label{fig:bound}
\end{figure}

\subsection{Quadratic stabilization versus SOS stabilization}
For stability analysis of switched linear systems, SOS Lyapunov functions often provide tighter bounds than quadratic Lyapunov functions, see \cite{ART:PJ08,ART:AJPR14} for a few examples. Recently, we have also found out that it is beneficial to use SOS Lyapunov functions in data-driven stability analysis in  \cite{INP:RWJ21} with numerical examples. Here, we show that this is also the case in the stabilization problem. We give such an example where SOS stabilization  outperforms quadratic stabilization. Consider the following switched linear system with $2$ modes, which is generated randomly with a procedure similar as above:
\begin{align*}
	A_1 &= \begin{pmatrix}
		-1.6856 & -0.1665\\
		 0.7785 &-1.6321
	\end{pmatrix}, 	B = \begin{pmatrix}
	0.1975\\
	 0.8640
\end{pmatrix},\\
	A_2 &= \begin{pmatrix}
-0.2915 & -3.2824\\
 3.9761 &-0.02274
	\end{pmatrix}.
\end{align*}
For different values of $N$,  let $\epsilon$ be chosen such that $\mathcal{B}(\epsilon;N) = 0.01$. We then compute the upper bounds on the JSR according to the results in Section \ref{sec:prob} and Section \ref{sec:sosstable}. For SOS stabilization, we consider the case of $d=2$. To speed up the computation, we use parallelized schemes in Algorithm \ref{algo:dataKparallel} and Algorithm \ref{algo:dataKsosparallel} where the size of the subsets is set to be $1000$.
The results are given in Figure \ref{fig:quadvssos}, which shows that the JSR upper bound from SOS stabilization becomes tighter as $N$ increases. When $N=25000$, the solutions for the two algorithms are: $K(\omega_N) = [-2.0257 ~ 0.5407]$ and $K_{sos}(\omega_N) = [-2.2517 ~ 0.9063]$. In addition to the probabilistic bounds in Figure \ref{fig:quadvssos}, we also compute the actual JSR of the closed-loop system using the JSR toolbox \cite{INP:VHJ14}: $\rho(\mathcal{A}_{K(\omega_N)}) = 2.6002$ and $\rho(\mathcal{A}_{K_{sos}(\omega_N)}) = 2.1188$. As a reference, we also solve the white-box quadratic stabilization problem as given in (\ref{eqn:QY}) and obtain the white-box solution $K^*=[-0.2831 ~ -0.2965]$. With this, we again compute the actual JSR \cite{INP:VHJ14}: $\rho(\mathcal{A}_{K^*})=2.5582$, as indicated by the dashed line in Figure \ref{fig:quadvssos}. From these computations, we can see that the data-driven solution from SOS stabilization even outperforms the solution from the white-box quadratic stabilization. 

\begin{figure}[H]
	\centering
	\includegraphics[width=0.8\linewidth]{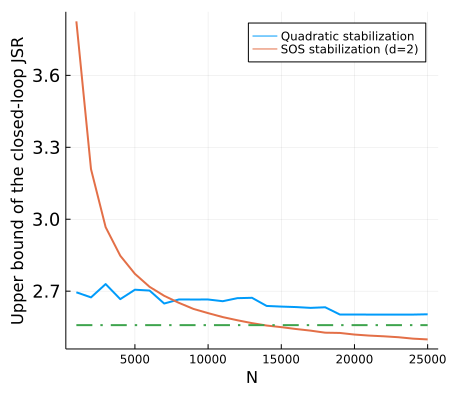}
	\caption{Comparison of quadratic stabilization and SOS stabilization: the dashed line is the JSR of the white-box quadratic stabilization solution.}
	\label{fig:quadvssos}
\end{figure}

\subsection{Building temperature regulation with LQR}
Buildings can be viewed as complex control systems with both continuous and discrete dynamics. For instance, events like opening/closing doors and windows instantly affect the dynamic evolution of the zone temperature. To capture these hybrid behaviors, hybrid RC networks are often used in the modeling of building systems
\cite{ART:FLC16,INP:ALCL17}. Consider a building with three zones as shown in Figure \ref{fig:building}. Its thermal RC model is described as below, $i=1,2,3$,
\begin{align*}
	c_i \dot{T}_i = \sum\limits_{j\not= i}\frac{T_{j}-T_{i}}{R_{ji}}+ \frac{T_o-T_i}{R^o_{i}} + m_ic_p(T_s-T_i) + q_i
\end{align*}
where $i$ is the index of the zone, $T_i$ is the temperature of zone $i$, $T_o$ is the temperature of outside air, $c_i$ is the thermal capacitance of the air in zone $i$, $R_{ij}$ denotes the thermal resistances between zone $i$ and zone $j$, $R^o_i$ denotes the thermal resistance between zone $i$ and the outside environment, $c_p$ is the specific heat capacity of air, $T_s$ is the temperature of the supply air delivered to zone $i$, $m_i$ is the flow rate into zone $i$ and $q_i$ is the thermal disturbance from internal loads like occupants and lighting. The temperatures of the supply air and the outside environment are known, i.e., $T_s$ and $T_o$ are available. The thermal disturbance is estimated as: $q_1 = 0.1 \si{kJ/s},q_2 = 0.1 \si{kJ/s}, q_3 = 0.12 \si{kJ/s}$. Other system parameters are given in Table \ref{tab:para}. The control objective is to steer the temperature of each zone to $T_{target} = \SI{24}{\degreeCelsius}$.

\begin{figure}[h]
	\centering
	\includegraphics[width=0.9\linewidth]{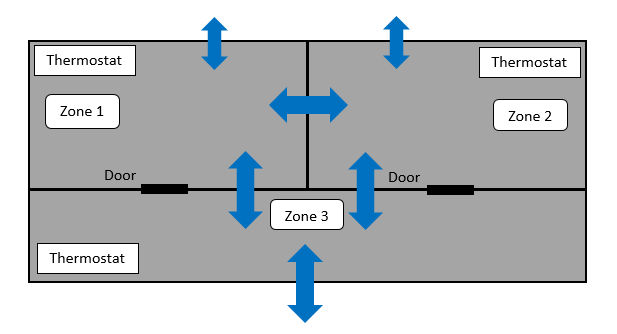}
	\caption{System schematic of the building}
	\label{fig:building}
\end{figure}

\begin{table}[H]
	\centering
	\renewcommand{\arraystretch}{1.5}
	\begin{tabular}{|c|c|c|}
		\hline
		Symbol & Value & Units \\ \hline
		$c_i,\forall i$ & $1.375\times 10^3$ & $\si{kJ/K}$ \\ \hline
		$c_p$ & $1.012$ & $\si{kJ/(kg\cdot K)}$ \\ \hline
		$R_{12}=R_{21}$ & $1.5$ & $\si{K/kW}$ \\ \hline
		$R^o_1 = R^o_2$ & $3$ & $\si{K/kW}$ \\ \hline
		$R^o_3$ & $2.7$ & $\si{K/kW}$ \\ \hline
		$T_s$ & $16$ & $\si{\degreeCelsius}$ \\ \hline
		$T_o$ & $32$ & $\si{\degreeCelsius}$ \\ \hline
	\end{tabular}
	\caption{System parameters} \label{tab:para}
\end{table}

By letting $Q^{AC}_i = m_ic_p(T_s-T_i)$, an equivalent linearized model is obtained:
\begin{align*}
	c_i \dot{T}_i = & \sum\limits_{j\not= i}\frac{T_{j}-T_{i}}{R_{ji}}+ \frac{T_o-T_i}{R^o_{i}} + Q^{AC}_i + q_i, i = 1,2,3.
\end{align*}
The steady input for the given targeted temperature is $\bar{Q}^{AC}_i = -\frac{T_o-T_{target}}{R^o_{i}} - q_i$ for all $i$. Let $x_i = T_i-T_{target}$ and $u_i = Q^{AC}_i-\bar{Q}^{AC}_i$ for all $i$. We get
\begin{align*}
	\dot{x}_i =  \sum\limits_{j\not= i}\frac{x_{j}-x_{i}}{c_i R_{ji}}- \frac{x_i}{c_i R^o_{i}} + \frac{1}{c_i}u_i, i = 1,2,3.
\end{align*}
As the doors are frequently and unpredictably open and closed, this is a typical switching system in which the thermal resistances $R_{13}$ (or $R_{31}$) and $R_{23}$ (or $R_{32}$) are changing arbitrarily. For each door, we consider two modes: "open" and "closed". Hence, for the overall system, there are $4$ modes in total, see Table \ref{tab:R}. The values of these thermal resistances are assumed to be unknown. In the simulation, we only use them to generate synthetic data.
\begin{table}[H]
	\centering
	\renewcommand{\arraystretch}{1.5}
	\begin{tabular}{|c|c|c|}
		\hline
		& "open" & "closed" \\ \hline
		$R_{13} = R_{31} ~ (\si{K/kW})$ & $0.8 $ & $1.2$ \\ \hline
		$R_{23} = R_{32} ~ (\si{K/kW})$ & $0.8 $ & $1.2$ \\ \hline
	\end{tabular}
	\caption{Thermal resistances of different modes.} \label{tab:R}
\end{table}
We discretize the continuous-time system with the sampling time $\tau = 3$ minutes. Here, we use the Euler forward method for its simple implementation. See \cite{ART:KMB13} for an extensive study on different discretization methods for buildings. The discretized system is given below:
\begin{align*}
	&\frac{x_i(t+1) -x_i(t)}{\tau} \\
	= &\sum\limits_{j\not= i}\frac{ x_{j}(t)-x_{i}(t)}{c_i R_{ji}}- \frac{x_i(t)}{c_i R^o_{i}}+\frac{u_i(t)}{c_i}, i = 1,2,3.
\end{align*}
We now design the LQR for the switching system above in a data-driven fashion. Let the LQR parameters be $Q=I$ and $R=0.02I$. For different values of $N$, we generate the data set $\omega_N$. The confidence level is set to be  $\mathcal{B}(\epsilon;N) = 0.01$ in all the cases and the corresponding $\epsilon$ is computed via bisection. We then use Algorithm \ref{algo:dataKLQR} with $\bar{\kappa}=100$ to obtain a feasible solution to the sampled LQR problem in (\ref{eqn:PKQRomega}). Let
\begin{align*}
	\bar{\xi}(\omega_N) \coloneqq \frac{\xi(\omega_N)}{\xi^*\left(\epsilon,\kappa\left(P(\omega_N) -Q - K(\omega_N)^\top R K(\omega_N)\right)\right)}
\end{align*}
From the discussions in Section \ref{sec:LQR}, the value $\bar{\xi}(\omega_N)$ can be considered as an indicator for feasibility (provided that $\omega_N$ is an $\epsilon$-covering of of $\mathbb{S}\times \mathcal{M}$):  $(P(\omega_N),K(\omega_N))$ is a feasible solution when $\bar{\xi}(\omega_N) \le 1$. We show the values of $\bar{\xi}(\omega_N)$ as the size of the data set $\omega_N$ increases in Figure \ref{fig:xibar}. From this curve, we can see that $\bar{\xi}(\omega_N)$  becomes less than $1$ when $N \ge 8000$. We also show the LQR solution when $N=12000$ below
\begin{align*}
	K(\omega_N)& = \begin{pmatrix}
		-3.3773 & -0.5579 & -0.6681\\
		-0.5580 &-3.3763 & -0.6660\\
		-0.6683 & -0.6686 & -3.2397
	\end{pmatrix}\\
	P(\omega_N) &= \begin{pmatrix}
		1.4041 & 0.1138 & 0.1333\\
		0.1138 & 1.4041 & 0.1333\\
		0.1333 & 0.1333 & 1.3787
	\end{pmatrix}.
\end{align*}
\begin{figure}[h]
	\centering
	\includegraphics[width=0.9\linewidth]{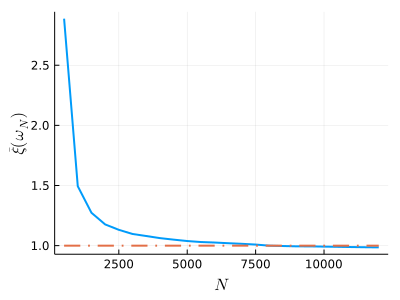}
	\caption{Feasibility measure of the LQR solution}
	\label{fig:xibar}
\end{figure}

As a way to validate the sample-based solution, we compute the white-box LQR solution by solving the LMI inequalities in (\ref{eqn:SQR}) and the solution is given below
\begin{align*}
	K^*& = \begin{pmatrix}
		-3.1736 & -0.4840 & -0.5938\\
		-0.4840 &-3.1736 & -0.5938\\
		-0.5882 & -0.5882 & -3.0320
	\end{pmatrix},\\
	P^* &= \begin{pmatrix}
		1.3844 & 0.1085 & 0.1270\\
		0.1085 & 1.3844 & 0.1270\\
		0.1270 & 0.1270 & 1.3602
	\end{pmatrix}.
\end{align*}
The relative differences between the two solutions are given by $\|K(\omega_N)-K^*\|/\|K^*\| \times 100\% = 8.44 \%$ and
$\|P(\omega_N)-P^*\|/\|P^*\| \times 100\% = 1.93 \%$, which suggests that the sample-based solution is a quite good approximation. Finally, we show the evolution of the temperatures of the three zones with the controller $u = K(\omega_N)x$ in Figure \ref{fig:T}.
\begin{figure}[h]
	\centering
	\includegraphics[width=0.9\linewidth]{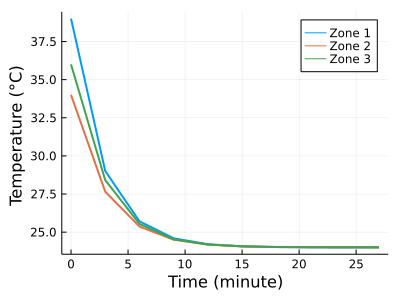}
	\caption{Temperatures of the three zones with the LQR controller.}
	\label{fig:T}
\end{figure}

\section{Conclusions}
We have presented a data-driven control framework for stabilization of black-box switched linear systems in which the dynamics matrices and the switching signal are unknown using quadratic and SOS Lyapunov functions. With quadratic Lyapunov functions, the stabilization problem is formulated as a biconvex problem using a finite number of trajectories. With SOS Lyapunov functions, we end up with a nonlinear optimization problem with a set of polynomial constraints. We then propose alternating minimization algorithms to solve these problems by making use of the underlying structure. In both cases, we develop parallelized schemes that allow to handle high-dimensional systems. Using the notions of covering/packing numbers, we also provide probabilistic stability guarantees via geometric analysis for the quadratic Lyapunov technique and sensitivity analysis for the SOS Lyapunov technique. Finally, we show that the proposed data-driven framework can be extended to LQR design of switched linear systems.


\bibliographystyle{unsrt}
\bibliography{Reference}

\end{document}